\newtheorem{thm}{Theorem}
\newtheorem{cor}[thm]{Corollary}
\newtheorem{prop}[thm]{Proposition}
\newtheorem{lem}[thm]{Lemma}
\newtheorem{defn}[thm]{Definition}
\newtheorem{rem}[thm]{Remark}
\newcommand{\R}[0]{\mathbb{R}}
\newcommand{\Z}[0]{\mathbb{Z}}
\newcommand{\N}[0]{\mathbb{N}}
\newcommand{\C}[0]{\mathbb{C}}
\renewcommand{\t}[1]{\textup{#1}}
\renewcommand{\l}[0]{\langle}
\renewcommand{\r}[0]{\rangle}
\numberwithin{equation}{section}
\newcommand\numberthis{\addtocounter{equation}{1}\tag{\theequation}}
\patchcmd{\@settitle}{\uppercasenonmath\@title}{}{}{}
\patchcmd{\@setauthors}{\MakeUppercase}{}{}{}
\patchcmd{\section}{\scshape}{}{}{}
\title[Strichartz estimates on products of odd-dimensional spheres]{Strichartz estimates for the Schr\"odinger equation on products of odd-dimensional spheres}
\author[Y. Zhang]{Yunfeng Zhang}
\email{yunfengzhang108@gmail.com}
\begin{document}

\onehalfspacing

\begin{abstract}
We prove Strichartz estimates for the Schr\"odinger equation which are scale-invariant up to an $\varepsilon$-loss on products of odd-dimensional spheres. Namely, for any product of odd-dimensional spheres $M=\mathbb{S}^{d_1}\times\cdots\times\mathbb{S}^{d_r}$ (so that $M$ is of dimension $d=d_1+\cdots+d_r$ and rank $r$) equipped with rational metrics, the following Strichartz estimate 
\begin{equation*}
\|e^{it\Delta}f\|_{L^p(I\times M)}\leq C_\varepsilon\|f\|_{H^{\frac{d}{2}-\frac{d+2}{p}+\varepsilon}(M)}
\end{equation*}
holds for any $p\geq 2+\frac{8(s-1)}{sr}$, 
where 
$$s=\max\left\{\frac{2d_i}{d_i-1}, i=1,\ldots,r\right\}.$$
\end{abstract}

\maketitle


\section{Introduction}
Let $M$ be a compact Riemannian manifold and let $\Delta$ denote the Laplace-Beltrami operator. Let $e_1,e_2,\ldots$ denote an orthonormal basis of $L^2(M)$ consisting of eigenfunctions of $-\Delta$ with respect to the eigenvalues $0=\lambda_1\leq\lambda_2\leq\ldots$, and define the Sobolev spaces
$$H^s(M):=\left\{f\in L^2(M):\  f=\sum_{i}f_ie_i,\ \|f\|_{H^s(M)}:=\left(\sum_i|f_i|^2(\lambda_i^{s}+1)\right)^{1/2}<\infty\right\}.$$
We may define the one-parameter unitary group $e^{it\Delta}: H^s(M)\to H^s(M)$ ($t\in\R$), such that 
\begin{align*}
e^{it\Delta}f=\sum_{i}e^{-it\lambda_i}f_ie_i, \t{ for }f=\sum_if_ie_i. 
\end{align*} 
Then $u(t,x)=e^{it\Delta}f(x)$ provides the solution to the linear Schr\"odinger equation 
$$\left\{\begin{array}{ll}
i\partial_t u(t,x)+\Delta u(t,x)=0,
\\
u(0,x)=f(x).
\end{array}\right.$$
By Sobolev embedding, $H^s(M)\subset L^{\frac{2d}{d-2s}}(M)$ for $s<\frac{d}{2}$. An interesting behavior of the Schr\"odinger flow $e^{it\Delta}$ is that although for each fixed $t\in\R$ and $f\in H^s(M)$, $e^{it\Delta}f$ may not lie in $L^{q}(M)$ for any $q>\frac{2d}{d-2s}$, but when averaging over $t$ in any (finite) interval $I$, we expect estimates of the form 
\begin{equation}\label{Strichartz}
\|e^{it\Delta}f\|_{L^p(I,L^q(M))}\leq C \|f\|_{H^s(M)}
\end{equation}
for some $q>\frac{2d}{d-2s}$, thus the solution $e^{it\Delta}f$ actually gains integrability almost surely in time. We refer to such estimates as Strichartz estimates, which characterize the dispersive nature of solutions and have important applications in solving nonlinear Schr\"odinger equations. If the underlying manifold $M$ is noncompact and satisfies some geometric restraints such as nontrapping for geodesics, these estimates are usually proved by first establishing $L^\infty(M)$ decay of solutions with $L^1(M)$ initial data as $t\to\infty$, making use of the assumptions on $M$ so that the solutions would genuinely  ``disperse to infinity". For reference, see the original work \cite{GV95,KT98} on the Euclidean spaces, and \cite{Ban07,Pie08,AP09,IS09,APV11,FMM15} on other noncompact manifolds such as hyperbolic spaces, Damek-Ricci spaces, and some locally symmetric spaces. However, in the compact picture, naively, there is no ``infinity'' for the solutions to disperse to, so the global geometry of $M$ would become more relevant, and proof of the Strichartz estimates would somehow combine global geometry with the local dispersion of solutions in a nice way. 

By a scale consideration, a necessary condition for the above Strichartz estimates to hold is
\begin{equation}\label{scaling}
s\geq \frac{d}{2}-\frac{2}{p}-\frac{d}{q},
\end{equation} 
for $p,q\geq 2$. Here $d$ denotes the dimension of $M$. If the above equality holds, we say the Strichartz estimates are scale-invariant, and non-scale-invariant otherwise. A fundamental result from \cite{BGT04} establishes that on a general compact Riemannian manifold 
\eqref{Strichartz} holds 
for all admissible pairs $(p,q)$
\begin{align*}
\frac{d}{2}=\frac{2}{p}+\frac{d}{q}, \ p,q\geq 2, \ (p,q,d)\neq (2,\infty,2),
\end{align*} 
with $s=\frac{1}{p}$. 
These estimates are non-scale-invariant, and are sharp for $(p,q,s)=(2,\frac{2d}{d-2}, \frac{1}{2})$ on spheres of dimension $d\geq 3$. The proof is in its essence a local argument, which establishes the same kind of $L^\infty$ decay as in the noncompact setting for solutions with frequency localized initial data, but only for a short period of time due to the compact nature of $M$, which results in the non-scale-invariance of the estimates. See also \cite{CE20} for some sharp non-scale-invariant estimates on spheres. 
On the other hand, known results of scale-invariant estimates (and their $\varepsilon$-loss versions) are all established by truly global arguments. Consider the special cases when $p=q$, then the scale-invariant estimates read
\begin{equation}\label{StrCri}
\|e^{it\Delta}f\|_{L^p(I\times M)}\leq C\|f\|_{H^{\frac{d}{2}-\frac{d+2}{p}}(M)}.
\end{equation}
We summarize known results of the above type: 
\begin{itemize}
\item In \cite{BGT07,Her13}, $\eqref{StrCri}$ is established for $p>4$ for any sphere $\mathbb{S}^d$ equipped with standard metrics and generally for any Zoll manifolds, of dimension $d\geq 3$, and $p\geq 6$ for the two sphere or any Zoll surface; $p>4$ is also the optimal range for $\mathbb{S}^d$ ($d\geq 3$) (\cite{BGT04}). The global geometry of the spheres and the Zoll manifolds is explored in the proof in terms of the explicit spectral distribution of the Laplacian. 
\item In \cite{Bou93,Bou13,BD15,KV16}, $\eqref{StrCri}$ is established for the optimal range $p>\frac{2(d+2)}{d}$ for rectangular tori, with the same estimates but with an $\varepsilon$-loss established for nonrectangular tori. The proofs use up-to-date tools of harmonic analysis on tori. Note also in \cite{Zha20}, it is observed that the $\varepsilon$-loss can be eliminated for rational nonrectangular tori. 

\item In \cite{Zha20}, $\eqref{StrCri}$ is established for $p\geq\frac{2(r+4)}{r}$ for any compact Lie group equipped with  standard metrics. Later in \cite{Zha21}, the same result is established for any compact globally symmetric space. 
Here $r$ is the rank of the underlying space, i.e. the dimension of a maximal totally geodesic submanifold. The proof explicitly applies representation theory and harmonic analysis for groups and symmetric spaces. Also certain upgrade on the range of $p$ is provided in \cite{Zha22} for compact Lie groups. 

\end{itemize}

In this note, we provide the same kind of upgrade on the range of $p$ for a special class of compact globally symmetric spaces as in \cite{Zha22} for compact Lie groups. We establish the following 

\begin{thm}\label{maintheorem}
Let $M=\mathbb{S}^{d_1}\times\cdots\times\mathbb{S}^{d_r}$ be any product of odd dimensional spheres, equipped with a rational metric, of dimension $d=d_1+\cdots+d_r$ and rank $r$.
Let  
$$s=\max\left\{\frac{2d_i}{d_i-1}, i=1,\ldots,r\right\}.$$
Let $\varepsilon>0$.  
Then the following Strichartz estimates hold
\begin{equation}\label{StrCri}
\|e^{it\Delta}f\|_{L^p(I\times M)}\leq C_\varepsilon\|f\|_{H^{\frac{d}{2}-\frac{d+2}{p}+\varepsilon}(M)}.
\end{equation}
for any $p\geq 2+\frac{8(s-1)}{sr}$.  
\end{thm}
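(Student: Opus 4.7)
The plan is to adapt the strategy of \cite{Zha20,Zha21,Zha22}: combine harmonic analysis on $M$ with the Bourgain-Demeter $\ell^2$-decoupling theorem. The key arithmetic structure exploited here is that on odd-dimensional spheres the eigenvalues $n(n+d_i-1)=(n+\rho_i)^2-\rho_i^2$ are shifted perfect squares, since $\rho_i=(d_i-1)/2\in\Z$. Consequently, along the ``radial'' $r$-torus, the Schr\"odinger propagator is governed by the standard integer paraboloid in $\Z^r$, to which decoupling applies in its sharpest form.

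First, since $-\Delta$ has discrete spectrum and the rational metric assumption makes $e^{it\Delta}$ periodic in $t$, a standard $TT^*$ and Littlewood-Paley argument reduces matters to proving, for each dyadic $N\geq 1$ and each $f\in L^2(M)$ with spectrum in the shell $\{\sqrt{\lambda}\in[N,2N)\}$, the bound
$$\|e^{it\Delta}f\|_{L^p(I\times M)}\leq C_\varepsilon N^{\frac{d}{2}-\frac{d+2}{p}+\varepsilon}\|f\|_{L^2(M)},$$
taken over a single period $I=[0,T]$. Next, expand $f$ in the tensor basis $Y_{n_1,j_1}\otimes\cdots\otimes Y_{n_r,j_r}$, shift $m_i=n_i+\rho_i$, and pull out a scalar time phase; this recasts $e^{it\Delta}f$ as a sum indexed by $m\in\Z^r$ with quadratic phases $-t\sum_i q_i m_i^2$, weighted by the tensor spherical harmonics.

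The core step is then an exponential-sum bound: on the ``auxiliary'' torus $I\times(\R/2\pi\Z)^r$ the $\ell^2$-decoupling theorem for the elliptic paraboloid in $\R^r$ yields, at $p_0=2(r+2)/r$,
$$\Bigl\|\sum_{|m|_q\asymp N} b_m e^{i(m\cdot\theta-|m|_q^2 t)}\Bigr\|_{L^{p_0}}\leq C_\varepsilon N^{\frac{r}{2}-\frac{r+2}{p_0}+\varepsilon}\Bigl(\sum_{m}|b_m|^2\Bigr)^{1/2}.$$
To transfer this to the true $L^p(I\times M)$ one replaces torus characters by spherical harmonic projections on each $\mathbb{S}^{d_i}$ and applies Sogge's $L^p$ estimates for spectral projectors. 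The exponent $s=2d_i/(d_i-1)$ appears as the optimal choice balancing the decoupling gain against the loss from the Sogge-type $L^s$ bound on each sphere; evaluating at $p=s$ on each factor, then interpolating the resulting inequality with the trivial $L^2$ bound, produces \eqref{StrCri} exactly at $p=2+8(s-1)/(sr)$. Sobolev embedding extends the estimate to all larger $p$.

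The main obstacle will be precisely the transfer step from toroidal decoupling to the genuine $L^p(I\times M)$ norm. The decoupling estimate sees only a one-dimensional ``radial'' character on each sphere, whereas the $L^p(M)$ norm involves the full eigenspaces of dimension $\sim n_i^{d_i-1}$. A careful use of Sogge's bounds on spherical harmonics, coupled with $TT^*$ on each factor, must absorb this spherical multiplicity without sacrificing the endpoint regularity. The choice $s=2d_i/(d_i-1)$ is the exponent where these two competing losses cancel optimally, and it is this cancellation that yields the quantitative improvement over the $p\geq 2+8/r$ threshold of \cite{Zha20}, where only the $L^2$ orthogonality of spherical harmonics is exploited on each sphere factor.
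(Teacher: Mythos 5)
Your reduction to dyadic, frequency-localized estimates over one time period, and your observation that on odd spheres the shifted eigenvalues $(n_i+\rho_i)^2-\rho_i^2$ are (weighted) perfect squares, are both fine and consistent with the paper. The genuine gap is the step you yourself flag as the ``main obstacle'': the transfer from scalar-coefficient decoupling on the auxiliary torus $I\times(\R/2\pi\Z)^r$ to the $L^p(I\times M)$ norm. As stated it is not an argument. Decoupling applied for each fixed $x\in M$ (or on the radial torus $A$) only sees the coefficients $b_m=(H_{m}f)(x)$, and for $p>2$ you cannot interchange the resulting $\ell^2_m$ sum with the $L^p_x$ norm; replacing characters by eigenspace projectors and invoking Sogge's bounds does not repair this. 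Concretely, at $q=s=\tfrac{2d_i}{d_i-1}$, which is strictly below the Sogge exponent $\tfrac{2(d_i+1)}{d_i-1}$, Sogge's projector bound still loses a positive power $n_i^{\frac{d_i-1}{4d_i}}$ per factor, while decoupling at $p_0=\tfrac{2(r+2)}{r}$ gives no compensating gain beyond $N^\varepsilon$; no mechanism for the claimed ``cancellation'' is exhibited, and the assertion that this scheme lands exactly at $p=2+\tfrac{8(s-1)}{sr}$ is unsupported numerology. Indeed you have also misidentified where $s$ comes from: it is not a Sogge-versus-decoupling balance but the integrability threshold of the equatorial singularity of the zonal kernel --- in the paper, the ultraspherical formula \eqref{sphericalsphere} writes $d_n\Phi_n^{(\lambda)}$ with weights $(2\sin\theta)^{-(\nu+\lambda)}$, and the condition $d-1-p(\nu+\lambda)\leq-1$ for the worst term $\nu=0$ forces $p\geq\tfrac{2d}{d-1}=s$ when integrating against $|\sin\theta|^{d-1}d\theta$ via \eqref{integralformula}.

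What the paper actually proves, and what your proposal has no substitute for, is the major-arc kernel bound of Theorem \ref{keyproposition}: $\|K_N(t,\cdot)\|_{L^p(M)}\lesssim_\varepsilon N^{d-\frac dp+\varepsilon}\big[\sqrt q\,(1+N\|\tfrac tT-\tfrac aq\|^{1/2})\big]^{-r}$ for $p\geq s$, obtained near the corners from the integral representation of spherical functions and Heckman--Opdam polynomials together with Weyl differencing (Lemma \ref{WeylSumCor}, Propositions \ref{DispersiveCorner}--\ref{Lppart1}), and away from the corners from the explicit ultraspherical expansion. The threshold $p\geq 2+\tfrac{8(s-1)}{sr}$ is then produced by the Hardy--Littlewood circle method / Stein--Tomas machinery of \cite{Zha21,Zha22} applied to that kernel bound, in which the Gauss-sum factor $\sqrt q$ and the time-decay $(1+N\|\tfrac tT-\tfrac aq\|^{1/2})^{-r}$ are essential. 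Your outline contains neither this dispersive time-decay input nor any space-time estimate on $M$ itself strong enough to replace it, so the proof as proposed does not go through; to salvage a decoupling-based route you would have to prove a genuine space-time estimate that controls the full eigenspace multiplicities (not a pointwise-in-$x$ application plus fixed-time Sogge bounds), which is precisely the open difficulty the paper circumvents by estimating the kernel directly.
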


As in \cite{Zha20} and \cite{Zha21}, we relate the above Strichartz estimates to some Lie theoretic Weyl-type quadratic exponential sums as follows:
\begin{align*}
K_N(t,[iH])=\sum_{\lambda\in\Lambda^+}\varphi\left(\frac{-|\lambda+\rho|^2+|\rho|^2}{N^2}\right)e^{it(-|\lambda+\rho|^2+|\rho|^2)}d_\lambda\Phi_\lambda([iH]),\ H\in\mathfrak{a}, 
\end{align*} 
where $\Lambda^+$ is the part of the weight lattice $\Lambda\subset\mathfrak{a}^*$ inside of a Weyl chamber associated to a root system, $\varphi$ is any smooth cutoff function, $d_\lambda$ as the dimension of spherical representations is a polynomial function in $\lambda$, and most importantly 
\begin{align*}
\Phi_\lambda([iH])=\sum_{j=1}^q c_je^{i\lambda_j(H)}, \ \lambda_j\in\Lambda,\ c_j\geq 0,\ H\in\mathfrak{a}, 
\end{align*}
as the spherical functions are some special exponential sums. Such $K_{N}(t,[iH])$ appear as the mollified kernel functions in the linear Schr\"odinger flow
\begin{align*}
e^{it\Delta}\varphi(N^{-2}\Delta)f=f*K_{N}(t,\cdot).
\end{align*} 
When the underlying manifold is actually a compact Lie group, the spherical functions $\Phi_\lambda$ are given explicitly by the Weyl character and dimension formulas, which allows us to establish in \cite{Zha21} the following ``major-arc'' (i.e., time intervals centered around rationals) estimates for the above quadratic exponential sums using classical techniques. 

\begin{prop}[\cite{Zha20}]\label{exp20}
Let $\mathbb{S}^1$ stand for the standard circle of unit length, and let $\|\cdot\|$ stand for the distance from 0 on $\mathbb{S}^1$. Define the major arcs
\begin{align*}
\mathcal{M}_{a,q}=\left\{t\in\mathbb{S}^1:\ \left\|t-\frac{a}{q}\right\|<\frac{1}{qN}\right\}
\end{align*}
where
\begin{align*}
a\in\mathbb{Z}_{\geq 0},\ q\in\N,\ a<q,\ (a,q)=1,\ q<N. 
\end{align*}
Let $G_i$ be simple Lie groups of dimension $d_i$ and rank $r_i$, $i=1,\ldots,m$, and let $M=G_1\times\cdots\times G_m$ be their product equipped with a rational metric. Let $d=\sum_i d_i$ and $r=\sum_i r_i$ be the dimension and rank of $M$ respectively. 
Let 
$$s=\max\left\{\frac{2d_i}{d_i-r_i}\right\}.$$
Consider the mollified Schr\"odinger kernel $K_N$. Then there is some $T>0$ such that $K_N(t,x)=K_N(t+T,x)$ for any $t\in\mathbb{R}$ and $x\in M$, and for each $p>s$, it holds that 
\begin{align}
\|K_N(t,\cdot)\|_{L^p(M)}\leq C\frac{N^{d-\frac{d}{p}}}{[\sqrt{q}(1+N\|\frac{t}{T}-\frac{a}{q}\|^{1/2})]^r}
\end{align}
for $\frac{t}{T}\in\mathcal{M}_{a,q}$.
\end{prop}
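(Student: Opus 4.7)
The plan is to combine three ingredients: the Weyl character and dimension formulas to represent $K_N$ as a weighted quadratic exponential sum over the weight lattice; classical Gauss sum and Poisson summation estimates on the major arcs to extract the prefactor $[\sqrt{q}(1+N\|\beta\|^{1/2})]^{-r}$; and the Weyl integration formula to reduce the $L^p$ norm to an integral on the maximal torus, whose integrability governs the threshold $p>s$.

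First I would reduce to a single simple factor. A partition of unity in the component frequencies $(N_1,\ldots,N_m)$ with $\max_i N_i \sim N$, together with the factorization $e^{it\Delta_M} = \bigotimes_i e^{it\Delta_{G_i}}$, gives
\begin{equation*}
K_N^M(t,x) = \sum_{\vec{N}} \prod_{i=1}^m K_{N_i}^{G_i}(t,x_i).
\end{equation*}
Minkowski's inequality and a dyadic sum then reduce the product bound with $r = \sum_i r_i$ to single-factor bounds for each $G_i$ with rank $r_i$ and threshold $s_{G_i} = 2d_i/(d_i-r_i)$. So I fix a compact simple Lie group $G$ of dimension $d$ and rank $r$. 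The Weyl character formula $\chi_\lambda(e^{iH})\Delta_W(iH) = \sum_{w\in W}\det(w)e^{iw(\lambda+\rho)(H)}$ and the Weyl dimension formula $d_\lambda = P(\lambda+\rho)/P(\rho)$, with $P(\mu) = \prod_{\alpha>0}\l\mu,\alpha\r$, together give after $W$-symmetrization
\begin{equation*}
\Delta_W(iH)\,K_N(t,e^{iH}) = \frac{1}{|W|P(\rho)}\sum_{\mu\in\Lambda+\rho} P(\mu)\,\varphi\!\left(\tfrac{|\mu|^2-|\rho|^2}{N^2}\right) e^{-it(|\mu|^2-|\rho|^2)}e^{i\mu(H)}.
\end{equation*}

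On a major arc $\mathcal{M}_{a,q}$, writing $t/T = a/q + \beta$ with $|\beta|<1/(qN)$, rationality of the metric makes $e^{-i(a/q)T|\mu|^2}$ depend on $\mu$ only modulo $q\Lambda$. Splitting $\mu = \mu_0 + q\nu$ and pulling out $e^{-i(a/q)T(|\mu_0|^2-|\rho|^2)}$, the $\mu_0$-sum is a multidimensional quadratic Gauss sum of modulus $\lesssim q^{r/2}$. Poisson summation in $\nu\in\Lambda$ converts the remaining sum into a sum over the dual lattice $q^{-1}\Lambda^*$ of Fourier transforms of $P\cdot\varphi(|\cdot|^2/N^2)e^{-i\beta T|\cdot|^2}$, each a Schwartz bump whose width is modulated by the chirp via stationary phase to produce the decay $(1+N|\beta|^{1/2})^{-r}$. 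The Weyl integration formula then gives
\begin{equation*}
\|K_N(t,\cdot)\|_{L^p(G)}^p = \frac{1}{|W|}\int_T \frac{|\Delta_W(iH)K_N(t,e^{iH})|^p}{|\Delta_W(iH)|^{p-2}}\,dH.
\end{equation*}
Inserting the pointwise bound above reduces the problem to controlling an integral with singular weight $|\Delta_W|^{-(p-2)}$ against a numerator that vanishes on the same singular set. Balancing these against the Gaussian decay produces precisely the integrability threshold $p > 2d/(d-r) = s_G$, after which assembling the single-factor bounds and summing dyadically in $\vec N$ yields the claimed product estimate.

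The main obstacle is the sharp torus integral in the last step. The numerator $F(H) = \Delta_W(iH)K_N(t,e^{iH})$ is not a clean radial bump: it oscillates, has secondary peaks indexed by the Gauss-sum mod-$q$ structure, and vanishes along each root hyperplane to partially compensate the singularity of $|\Delta_W|^{-(p-2)}$. A dyadic decomposition of $T$ indexed by the distance to each root hyperplane, combined on each piece with the Gaussian decay from Poisson summation and the singular weight, is what pins down the threshold $p > s$ without additional loss in $q$ or $|\beta|$.
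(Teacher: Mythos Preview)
This proposition is not proved in the present paper; it is quoted from \cite{Zha20}. The paper only indicates that the argument rests on the Weyl character and dimension formulas together with ``classical techniques'' for quadratic exponential sums, and it records (as Lemma~\ref{WeylSumCor}) a variant of the relevant exponential-sum lemma from \cite{Zha20}. That lemma is proved by Weyl differencing---squaring the sum and summation by parts---rather than by the Gauss-sum-plus-Poisson route you propose. Both are standard major-arc methods; your route, carried through carefully, should avoid the $N^\varepsilon$ loss that Weyl differencing incurs, which is consistent with the absence of $\varepsilon$ in the stated proposition.

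Your structural outline matches what one expects and what the paper does for spheres: pull out $\Delta_W$ via the character formula, bound the full-lattice numerator $\tilde K_N=\Delta_W K_N$ on major arcs, and compute $\|K_N\|_{L^p}^p=|W|^{-1}\int_T|\tilde K_N|^p|\Delta_W|^{2-p}$. The genuine gap is the step you yourself flag as the ``main obstacle''. A uniform-in-$H$ bound on $\tilde K_N$ does not close the argument: $|\Delta_W|^{2-p}$ fails to be integrable across every cell wall once $p\ge 3$, and for a higher-rank simple factor the singular set of $\Delta_W$ is stratified by root subsystems (walls, edges, corners), not just corners. In the paper's sphere proof (rank-one factors, so walls coincide with corners) this is handled by treating an $N^{-1}$-neighbourhood of the corners separately via the Harish-Chandra integral for $\Phi_\lambda$ (Proposition~\ref{Lppart1}). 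For a higher-rank simple factor one additionally needs either a root-subsystem reduction near each face of the alcove, or an independent pointwise bound $|K_N(t,x)|\lesssim N^d/[\sqrt q(1+N\|\tfrac{t}{T}-\tfrac{a}{q}\|^{1/2})]^r$ on all of $G$ to interpolate against; the trivial bound $\|K_N\|_{L^\infty}\lesssim N^d$ carries no $q$-saving and is useless here. Your ``dyadic decomposition by distance to root hyperplanes plus Gaussian decay from Poisson'' does not supply this: after Poisson the numerator is a $q^{-1}$-spaced array of bumps, and the vanishing of $\tilde K_N$ along walls comes from the $W$-skewness of the weight $P(\mu)$, not from the decay of any single bump. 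Extracting that cancellation near each stratum while keeping the full prefactor $q^{-r/2}(1+N|\beta|^{1/2})^{-r}$ intact is precisely the work your sketch leaves undone.
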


In this paper, we provide the following analogue of the above result on products of odd-dimensional spheres. 

\begin{thm}\label{keyproposition}
Let $M=\mathbb{S}^{d_1}\times\cdots\times\mathbb{S}^{d_r}$ be any product of odd dimensional spheres, equipped with a rational metric, of dimension $d=d_1+\cdots+d_r$ and rank $r$.
Let  
$$s=\max\left\{\frac{2d_i}{d_i-1}, i=1,\ldots,r\right\}.$$  
Consider the mollified Schr\"odinger kernel $K_N$. Then there is some $T>0$ such that $K_N(t,x)=K_N(t+T,x)$ for any $t\in\mathbb{R}$ and $x\in M$, and for each $p\geq s$, it holds that 
\begin{align}\label{KNtH}
\|K_N(t,\cdot)\|_{L^p(M)}\leq C_\varepsilon\frac{N^{d-\frac{d}{p}+\varepsilon}}{[\sqrt{q}(1+N\|\frac{t}{T}-\frac{a}{q}\|^{1/2})]^r}
\end{align}
for $\frac{t}{T}\in\mathcal{M}_{a,q}$.
\end{thm}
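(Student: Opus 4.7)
The plan exploits three ingredients: (a) the product structure of $M$; (b) the fact that for odd $d_i$ the Laplacian eigenvalues on $\mathbb{S}^{d_i}$, $\ell(\ell+d_i-1)=(\ell+\rho_i)^2-\rho_i^2$ with $\rho_i=(d_i-1)/2\in\Z$, are perfect squares up to a constant shift, so that under the substitution $m=\ell+\rho_i$ the time phase becomes the classical quadratic $e^{-itc_im^2}$ (where $c_i$ is the rational scaling of the metric on the $i$-th factor); and (c) a Weyl-type closed-form expression for the zonal spherical function $d_\ell\Phi_\ell$ on $\mathbb{S}^{d_i}$ as a trigonometric polynomial in $m\theta$ with polynomial-in-$m$ coefficients, divided by $(\sin\theta)^{(d_i-1)/2}$. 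For $\mathbb{S}^3=SU(2)$ this is the character formula $d_\ell\Phi_\ell=m\sin(m\theta)/\sin\theta$; for larger odd $d_i$ it follows from the derivative formula for Gegenbauer polynomials together with the sharp Szeg\H{o} asymptotics of Jacobi polynomials.

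Granting (c), on a single odd factor the argument proceeds as follows: for $t/T\in\mathcal{M}_{a,q}$, split $\sum_m$ into residues $m\bmod q$, apply Poisson summation on each coset, evaluate the resulting quadratic Gauss sum (contributing $\sqrt q$), and perform stationary phase in the linear variable with phase $-itc_im^2\pm m\theta$ (contributing the factor $1+N\|t/T-a/q\|^{1/2}$). This yields the pointwise bound
\[
|K_N^{(i)}(t,\theta)|\leq C_\varepsilon\frac{N^{(d_i+1)/2+\varepsilon}}{(\sin\theta)^{(d_i-1)/2}\sqrt{q}(1+N\|t/T-a/q\|^{1/2})}
\]
for $\theta\gtrsim 1/N$, with the trivial bound $|K_N^{(i)}|\lesssim N^{d_i+\varepsilon}/[\sqrt{q}(1+N\|\cdot\|^{1/2})]$ for $\theta\lesssim 1/N$. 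Integrating the $p$-th power against the surface measure $(\sin\theta)^{d_i-1}d\theta\,d\sigma$ and matching the two regimes at the transition scale $\theta\sim 1/N$ gives
\[
\|K_N^{(i)}(t,\cdot)\|_{L^p(\mathbb{S}^{d_i})}\leq C_\varepsilon\frac{N^{d_i-d_i/p+\varepsilon}}{\sqrt{q}(1+N\|t/T-a/q\|^{1/2})}
\]
valid precisely for $p\geq s_i=2d_i/(d_i-1)$; this threshold emerges as the exponent at which the integral $\int_{1/N}^\pi \theta^{d_i-1-p(d_i-1)/2}d\theta$ ceases to be dominated by its endpoint contribution at $\theta\sim 1/N$, matching exactly the near-pole trivial regime.

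To pass from a single factor to the product, Fourier-invert the joint cutoff $\varphi(y/N^2)=\int\widehat\varphi(\eta)e^{iy\eta/N^2}d\eta$, decoupling the frequency constraint on $M$ into a product of per-factor cutoffs at the time-shifted argument $t-\eta/N^2$. Since $\widehat\varphi$ is Schwartz and, for $q<N$, the shift $1/N^2$ is much smaller than the major-arc radius $1/(qN)$, the per-factor bounds at shifted time agree with those at $t$ up to an $N^\varepsilon$ factor (the tail $|\eta|\gtrsim N^\delta$ being absorbed by rapid decay of $\widehat\varphi$); Fubini and Minkowski then multiply the per-factor estimates over $i=1,\ldots,r$ to yield the claimed bound with $s=\max_is_i$. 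I expect the main obstacle to be step (c) together with the $L^p$ integration: one must have the Weyl-type closed form (not just the Szeg\H{o} envelope) with careful control of the polynomial-in-$m$ coefficients (trivial on $\mathbb{S}^3$ but more delicate for $d_i\geq 5$), so that the Gauss-sum oscillation really produces the pointwise bound with $(\sin\theta)^{(d_i-1)/2}$ in the denominator, and so that the two regimes match cleanly at $\theta\sim 1/N$ with only the stated $N^\varepsilon$ loss at the critical exponent $p=s_i$.
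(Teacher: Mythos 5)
Your overall strategy coincides with the paper's: reduce to a single odd sphere, use the explicit (Szeg\H{o}-type) closed form of the ultraspherical polynomials to turn $K_N$ away from the poles into a quadratic exponential sum with an explicit $\sin\theta$-power envelope, prove a major-arc bound for that sum, integrate in $\theta$ to get the single-factor $L^p$ bound exactly for $p\geq 2d_i/(d_i-1)$, and multiply over factors. The genuine gap is your treatment of the region $\theta\lesssim N^{-1}$ around the two poles: the bound $|K_N^{(i)}|\lesssim N^{d_i+\varepsilon}/[\sqrt{q}(1+N\|\frac{t}{T}-\frac{a}{q}\|^{1/2})]$ there is \emph{not} trivial. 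The trivial bound near a pole is only $N^{d_i}$, with no major-arc gain, and without the gain the near-pole contribution carries no $q$-dependence and the circle-method step that converts Theorem \ref{keyproposition} into Strichartz estimates collapses. Obtaining the gain requires showing that $\lambda\mapsto d_\lambda\Phi_\lambda([iH])$ has difference-in-$\lambda$ bounds of size $N^{d-r-k}$ uniformly on the $N^{-1}$-ball (so that Weyl differencing/summation by parts, or your Gauss-sum argument, can be run with these $\theta$-dependent amplitudes), which the paper gets from the integral representation $\Phi_\lambda(uK)=\int_K e^{-\lambda(\mathbf{A}(ku^{-1}k^{-1}))}dk$ with $|\mathbf{A}|\lesssim N^{-1}$ (Lemma \ref{differencePhi}), and a separate argument at the antipodal corner, where $\Phi_n(\pi-\theta)=(-1)^n\Phi_n(\theta)$ introduces an extra character that must be absorbed into the linear phase (the paper's Corollary \ref{cornerto0} and Proposition \ref{DispersiveCorner}). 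This is the content of the paper's entire Section 3; for a single sphere it can be done classically via the Laplace integral representation of Gegenbauer polynomials plus the parity relation, but it must be done, and your proposal contains no argument for it.

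Two smaller points. First, your closed form (c) is misstated for $d_i\geq 5$: the exact identity (Szeg\H{o} (8.4.13), the paper's \eqref{sphericalsphere}, obtainable from the Gegenbauer derivative formula as you suggest) is not a single term over $(\sin\theta)^{(d_i-1)/2}$ but a sum over $\nu=0,\ldots,\frac{d_i-1}{2}-1$ of terms with denominators $(\sin\theta)^{\frac{d_i-1}{2}+\nu}$ and amplitudes $d_nC_{n,\nu}$ of size $n^{\frac{d_i-1}{2}-\nu}$; your single-power envelope is correct only for $\sin\theta\gtrsim N^{-1}$ and the estimate must be run termwise (each $\nu$ ends up giving the same $N^{d_i-d_i/p}$ bound, with $\nu=0$ forcing the threshold $p\geq 2d_i/(d_i-1)$). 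Note also that these amplitudes are ratios of polynomials in $n$, so whichever route you take for the exponential sum (your Poisson summation plus Gauss sums plus stationary phase, versus the paper's Weyl differencing plus the one-dimensional major-arc Weyl inequality) must tolerate such amplitudes; both do, but the differencing route needs only the discrete difference bounds \eqref{DecCon1}. Second, your Fourier-inversion decoupling of the joint cutoff into per-factor cutoffs at time shifted by $O(N^{\delta-2})$ (harmless at the major-arc scale $1/(qN)$) is sound, and in fact more careful than the paper, which simply asserts that the product kernel factorizes; that assertion is literal only for product-form cutoffs, so your step is a legitimate way to close that point.
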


From this key theorem, the derivation of Theorem \ref{maintheorem} is a rather standard procedure that combines the Hardy-Littlewood circle method with harmonic analysis on groups and symmetric spaces; for details see the proof of Theorem 1 in \cite{Zha21} and the proof of Theorem 1.3 in \cite{Zha22}. 
The proof of Theorem \ref{keyproposition} exploits three formulas of the spherical functions $\Phi_\lambda$ on odd-dimensional spheres. The first is an integral formula near the identity element, which will provide the part of the desired estimate \label{KNtH} on $N^{-1}$ neighborhoods of the identity element. The second is the expression of spherical functions as the Jacobi polynomials associated to root systems, which along with the first will provide the desired bound on $N^{-1}$ neighborhoods of all the corners of any maximal torus. These two parts of the estimate \label{KNtH} are also valid on a general compact globally symmetric space. The third are the explicit formulas of ultraspherical polynomials, which express the spherical functions on odd-dimensional spheres. 

We provide the following application of Theorem \ref{maintheorem} to nonlinear Schr\"odinger equations and refer to \cite{Zha21} for a proof. 

\begin{thm}\label{application}
Consider the Cauchy problem for the nonlinear Schr\"odinger equation
\begin{align}\label{Cauchyproblem}
\left\{
\begin{array}{ll}
i\partial_t u+\Delta u=F(u), & \ u=u(t,x),\ t\in\R, \ x\in M,\\
u(0,x)=u_0(x)\in H^s(M), & \ x\in M.
\end{array}
\right.
\end{align}
Suppose $F(u)=F(u,\bar{u})$ is a polynomial function in $u$ and its complex conjugate $\bar{u}$ of degree $\beta$ such that $F(0)=0$. Let $M=\mathbb{S}^{d_1}\times\cdots\times\mathbb{S}^{d_r}$ be any product of odd dimensional spheres, equipped with a rational metric, of dimension $d=d_1+\cdots+d_r$ and rank $r$.
Let  
$s=\max\left\{\frac{2d_i}{d_i-1}, i=1,\ldots,r\right\}$
and 
$p_0=2+\frac{8(s-1)}{sr}$. Then 
\eqref{Cauchyproblem} is uniformly well-posedness in $H^s(M)$ for any $s>\frac{d}{2}-\frac{2}{\max(\beta-1,p_0)}$. In particular, if $\beta\geq 1+p_0$, then \eqref{Cauchyproblem} is uniformly well-posedness in $H^s(M)$ for any $s>s_c:=\frac{d}{2}-\frac{2}{\beta-1}$. 
\end{thm}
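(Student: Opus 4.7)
The plan is to solve \eqref{Cauchyproblem} by a standard contraction mapping argument applied to the Duhamel formulation
$$u(t) = e^{it\Delta}u_0 - i\int_0^t e^{i(t-t')\Delta} F(u(t'))\, dt',$$
in a resolution space built from the Strichartz estimate of Theorem \ref{maintheorem}. I would set $p = \max(\beta-1, p_0)$, so that $p \geq p_0$ and the estimate \eqref{StrCri} is available at exponent $p$, and work in
$$X^s_T := C\bigl([0,T]; H^s(M)\bigr) \cap L^p\bigl([0,T]; W^{\sigma, p}(M)\bigr),$$
where $\sigma = s - \bigl(\tfrac{d}{2} - \tfrac{d+2}{p}\bigr) - \varepsilon$ is the regularity exponent matched to the loss in \eqref{StrCri}, and $T>0$ is to be chosen small.

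The contraction is driven by two estimates. The linear bound $\|e^{it\Delta}u_0\|_{X^s_T}\leq C\|u_0\|_{H^s}$ is immediate from the unitarity of $e^{it\Delta}$ on $H^s$ together with Theorem \ref{maintheorem} applied to $(1-\Delta)^{\sigma/2}u_0$. For the nonlinear term, the dual of Theorem \ref{maintheorem} combined with the Christ--Kiselev lemma reduces the $X^s_T$-bound on $\int_0^t e^{i(t-t')\Delta}F(u)\,dt'$ to a bound on $\|F(u)\|_{L^{p'}([0,T];W^{\sigma,p'}(M))}$. Since $F$ is polynomial of degree $\beta$ with $F(0)=0$, a fractional Leibniz rule on $M$ (the Kato--Ponce inequality transplanted to the compact symmetric space $M$) controls this quantity by $\|u\|_{L^{q_1}_t L^{q_2}_x}^{\beta-1}\|u\|_{L^p_t W^{\sigma,p}_x}$ for a Hölder-admissible pair $(q_1,q_2)$, which in turn is dominated by $\|u\|_{X^s_T}^\beta$ via Sobolev embedding into $L^{q_2}(M)$ (valid in the claimed regularity range) and a Hölder-in-time factor $T^\alpha$ with $\alpha>0$.

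The main obstacle is the bookkeeping of these Hölder/Sobolev exponents against the $\varepsilon$-loss in \eqref{StrCri}. One must verify that with $p=\max(\beta-1,p_0)$ the embedding $W^{\sigma,p}(M)\hookrightarrow L^{q_2}(M)$ is admissible precisely under $s>\tfrac{d}{2}-\tfrac{2}{p}$, which is the threshold in the statement. When $\beta\geq 1+p_0$, the choice $p=\beta-1$ makes this threshold coincide with the scale-critical exponent $s_c=\tfrac{d}{2}-\tfrac{2}{\beta-1}$, and the strict inequality $s>s_c$ is exactly the price paid for the $\varepsilon>0$ in \eqref{StrCri}; in the remaining range $\beta-1<p_0$, the choice $p=p_0$ produces the non-critical threshold $\tfrac{d}{2}-\tfrac{2}{p_0}$. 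Once this exponent check is carried out, a standard persistence-of-regularity argument upgrades the fixed point to uniform continuous dependence on the data in $H^s$.
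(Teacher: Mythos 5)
Your overall strategy (Duhamel plus contraction in a Strichartz-type space built from Theorem \ref{maintheorem}, with $p=\max(\beta-1,p_0)$, leading to the threshold $s>\frac{d}{2}-\frac{2}{p}$) is the right one, and it is essentially the scheme of the proof the paper points to in \cite{Zha21} (following \cite{BGT04}). However, there is a genuine gap in your treatment of the inhomogeneous term. You claim that ``the dual of Theorem \ref{maintheorem} combined with the Christ--Kiselev lemma'' bounds $\int_0^t e^{i(t-t')\Delta}F(u)\,dt'$ in $X^s_T$ by $\|F(u)\|_{L^{p'}([0,T];W^{\sigma,p'})}$. Theorem \ref{maintheorem} is not an $L^2$-matched (admissible-pair) estimate: writing $a=\frac{d}{2}-\frac{d+2}{p}+\varepsilon$, the estimate reads $\|e^{it\Delta}f\|_{L^p_{t,x}}\lesssim\|f\|_{H^{a}}$, so the operator $T f=e^{it\Delta}f$ maps $H^{a}\to L^p_{t,x}$ while $T^*$ maps $L^{p'}_{t,x}\to H^{-a}$; to run $TT^*$ and Christ--Kiselev you must renormalize to an $L^2$-based estimate and you then pay the loss twice. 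What duality actually yields is of the form $\bigl\|\int_0^t e^{i(t-t')\Delta}F\,dt'\bigr\|_{L^p_tW^{\sigma,p}}\lesssim\|F\|_{L^{p'}_tW^{\sigma+2a,p'}}$ and, for the energy component, $\bigl\|\int_0^t e^{i(t-t')\Delta}F\,dt'\bigr\|_{L^\infty_tH^{s}}\lesssim\|F\|_{L^{p'}_tW^{s+a,p'}}$. Both right-hand sides require $s+a>s$ derivatives of $F(u)$, whereas your space $X^s_T$ only controls $\sigma=s-a<s$ derivatives in $L^p$ and $s$ derivatives in $L^2$; no fractional Leibniz/Kato--Ponce manipulation recovers the missing $a$ (resp.\ $2a$) derivatives. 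The estimate you actually invoke, with $W^{\sigma,p'}$ on the right, would amount to a loss-free inhomogeneous Strichartz inequality, which does not follow from Theorem \ref{maintheorem} by duality and is not available here. Consequently the exponent bookkeeping you defer to at the end cannot be carried out at the claimed threshold along this route.

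The standard repair, and what the cited proof does, is to avoid the dual estimate altogether: by Minkowski's inequality and unitarity, $\bigl\|\int_0^t e^{i(t-t')\Delta}F(u)\,dt'\bigr\|_{X^s_T}\lesssim\|F(u)\|_{L^1([0,T];H^s)}$, then use the Moser-type bound $\|F(u)\|_{H^s}\lesssim\|u\|_{L^\infty(M)}^{\beta-1}\|u\|_{H^s(M)}$, the embedding $W^{\sigma,p}(M)\hookrightarrow L^\infty(M)$ (valid since $\sigma>\frac{d}{p}$ is exactly $s>\frac{d}{2}-\frac{2}{p}+\varepsilon$), and H\"older in time; when $\beta-1\geq p_0$ one should take $p$ slightly larger than $\beta-1$ (allowed, since Theorem \ref{maintheorem} holds for all $p\geq p_0$ and $s>\frac{d}{2}-\frac{2}{\beta-1}$ is an open condition) so that the H\"older step produces a positive power of $T$. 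With that replacement your argument closes and gives uniform (Lipschitz) well-posedness as stated.
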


\section{Preliminaries}
Let $M=\mathbb{S}^{d_1}\times\cdots\times\mathbb{S}^{d_r}$  be a product of odd-dimensional spheres. We will proceed rather generally and view $M=U/K$ as simply connected symmetric spaces of the compact type, and review their properties (\cite{Hel84}, \cite{Hel01}, \cite{Hel08}, \cite{Tak94}). 
Let the Lie algebras of $U,K$ be $\mathfrak{u},\mathfrak{k}$ respectively, and we consider their dual symmetric pair $\mathfrak{g},\mathfrak{k}$ of Lie algebras of the noncompact type such that both $\mathfrak{u},\mathfrak{g}$ lie in the same complexification $\mathfrak{g}^\mathbb{C}$. 
The negative of the Killing form $-\langle \ ,\  \rangle$ defined on $\mathfrak{u}$ induces the Killing metric on $U/K$ invariant under the action of $U$. Slightly more generally, we consider rational products of these metrics 
$g=\otimes_{j=1}^r \beta_jg_{j}$, where the $g_j$'s are the Killing metrics on the irreducible components $\mathbb{S}^{d_j}$ of $M$, and the quotients among the $\beta_j$'s are rational numbers.

Let $(\delta, V_\delta)$ be an irreducible unitary representation of $U$ and let $V_\delta^K$ be the space of vectors $v\in V_\delta$ fixed under $\delta(K)$. 
We say $\delta$ is spherical if $V_\delta^K\neq 0$. Let $\delta$ be such an irreducible spherical representation of $U$. Then $V_\delta^K$ is spanned by a single unit vector ${\bf e}$, and let 
\begin{align}\label{matrixcoefficients}
H_\delta(U/K)=\{\langle\delta(u){\bf e}, v\rangle_{V_\delta}: v\in V_\delta\}.
\end{align} 
Let $\widehat{U}_K$ be the set of equivalence classes of spherical representations of $U$ with respect to $K$.  
The theory of Peter-Weyl gives the Hilbert space decomposition 
\begin{align*}
L^2(U/K)= \bigoplus_{\delta\in \widehat{U}_K} H_\delta(U/K).
\end{align*}
Here the $L^2$ space is of course defined using the $U$-invariant measure on $U/K$. 
Define the spherical functions
\begin{align*}
\Phi_\delta(u):=\langle\delta(u){\bf e}, {\bf e}\rangle_{V_\delta}\in H_\delta(U/K),
\end{align*}  
then the $L^2$ projections $P_\delta: L^2(U/K)\to H_\delta(U/K)$ can be realized by convolution with $d_\delta\Phi_\delta$, so we  have the $L^2$ spherical Fourier series
\begin{align*}
f=\sum_{\delta\in \widehat{U}_K} d_\delta f *  \Phi_\delta
=\sum_{\delta\in \widehat{U}_K} d_\delta \Phi_\delta * f.
\end{align*} 
Here the convolution on $U/K$ is of course defined by pulling back the functions to $U$ and then applying the group convolution.

Next we explicitly characterizes the Fourier dual. Let $\mathfrak{g}=\mathfrak{k}+\mathfrak{p}$ be the Cartan decomposition and $\mathfrak{a}$ be the maximal abelian subspace of $\mathfrak{p}$. Let $\Sigma\subset\mathfrak{a}^*$ denote the restricted root system and let $m_\lambda\in\N$ ($\lambda\in\Sigma$) denote the  multiplicity function. Let $\mathfrak{b}$ be a maximal abelian subspace of the centralizer $\mathfrak{m}$ of $\mathfrak{a}$ in $\mathfrak{k}$ and let $\mathfrak{h}_\R=\mathfrak{a}+i\mathfrak{b}$. 
Let  
$\Sigma^+$ denote a set of positive restricted roots in $\Sigma$ with respect to an order in $\mathfrak{a}^*$ compatible with one in $\mathfrak{h}_\R^*$. Then we have the Iwasawa decomposition 
\begin{align}\label{Iwasawa}
\mathfrak{g}=\mathfrak{n}+\mathfrak{a}+\mathfrak{k}
\end{align}
where $\mathfrak{n}=\sum_{\lambda\in \Sigma^+}\mathfrak{g}_\lambda$ is the direct sum of positive restricted root spaces $\mathfrak{g}_\lambda$. Let $r$ and $d$ be the rank and dimension of $U/K$ respectively. The dimension of $\mathfrak{g}_\lambda$ being $m_\lambda$, the Iwasawa decomposition implies 
\begin{align}\label{numberofroots}
\sum_{\lambda\in\Sigma^+}m_\lambda=d-r. 
\end{align}
Let 
$
\Sigma_*:=\{\alpha\in\Sigma: 2\alpha\notin\Sigma\}
$ be the root system consisting of inmultiplicable roots. Let the weight lattice $\Lambda$ be
\begin{align}\label{Weightlattice}
\Lambda:=\{\lambda\in\mathfrak{a}^*: \frac{\langle\lambda,\alpha\rangle}{\langle\alpha,\alpha\rangle}\in\mathbb{Z}, \ \text{for all }\alpha\in\Sigma_*\}.
\end{align}
Let $\Gamma$ be the restricted root lattice generated by the root system $2\cdot \Sigma$. Then $\Gamma\subset \Lambda$. Let $\Sigma_*^+=\Sigma^+\cap \Sigma_*$ be the set of positive roots in $\Sigma_*$.
Let 
\begin{align*}
\Lambda^+:=\{\lambda\in\mathfrak{a}^*: \frac{\langle\lambda,\alpha\rangle}{\langle\alpha,\alpha\rangle}\in\mathbb{Z}_{\geq 0}, \ \text{for all }\alpha\in\Sigma_*^+\}
\end{align*}
be the set of dominant weights. Given any irreducible spherical representation of $\delta\in\widehat{U}_K$, the highest weight of $\delta$ vanishes on $\mathfrak{b}$ and restricts on $\mathfrak{a}$ as an element in $\Lambda^+$. This gives the isomorphism 
\begin{align}\label{SphericalWeights}
\Lambda^+\cong\widehat{U}_K.
\end{align}
We can also express $\Lambda, \Lambda^+$ in terms of a basis. Let $\{\alpha_1,\cdots, \alpha_r\}$ be the set of simple roots in $\Sigma_*^+$. Let $\{w_1,\cdots, w_r\}$ be the fundamental weights, the dual basis to the (half) coroot basis $\{\frac{\alpha_1}{\langle\alpha_1, \alpha_1\rangle}, \cdots, \frac{\alpha_r}{\langle\alpha_r, \alpha_r\rangle}\}$. Then  
\begin{align*}
\Lambda&=\mathbb{Z}w_1+\cdots+\mathbb{Z}w_r,\\
\Lambda^+&=\mathbb{Z}_{\geq 0}w_1+\cdots+\mathbb{Z}_{\geq 0}w_r. 
\end{align*}

Consider the map 
$
i\mathfrak{a}\to U/K, \ iH\mapsto \exp(iH) K.
$ 
Let $A$ denote the image of the map, then 
\begin{align*}
A\cong i\mathfrak{a}/\Gamma^{\vee}
\end{align*}
where $\Gamma^{\vee}=\{iH\in i\mathfrak{a}: \exp(iH)\in K\}$ is a lattice of $i\mathfrak{a}$. Then 
\begin{align*}
\Gamma^{\vee}=2\pi i\mathbb{Z}\frac{H_{\alpha_1}}{\langle\alpha_1, \alpha_1\rangle}+ \cdots+ 2\pi i\mathbb{Z}\frac{H_{\alpha_r}}{\langle {\alpha_r}, \alpha_r\rangle}.
\end{align*}
Here $H_{\alpha_i}\in\mathfrak{a}$ corresponds to $\alpha_i\in\mathfrak{a}^*$ via the Killing form on $\mathfrak{a}$. We have the isomorphism between $\Lambda$ and the character group $\widehat{A}$ of $A$
\begin{align*}
\Lambda\xrightarrow{\sim}\widehat{A}, \ \lambda\mapsto e^\lambda. 
\end{align*}
Define the cells in $A$ to be the connected components of $A\setminus\cup_{\alpha\in\Sigma}\{[iH]\in A: \alpha(H)\in\pi\mathbb{Z}\}$, and the hyperplanes in $\{[iH]\in A: \alpha(H)\in\pi \Z\}$ are called cell walls. 
For $H\in\mathfrak{a}$, we say $[iH]\in A$ is a corner if $\alpha(H)\in \pi \mathbb{Z}$ for all $\alpha\in\Sigma$. 
Every corner is fixed under the action of $K$ and the set of corners in $A$ is isomorphic to the finite set $\Lambda^{\vee}/\Gamma^{\vee}$. 

We now specialize to rank one spaces and especially odd-dimensional spheres. 
Let $M=U/K$ be a simply connected compact symmetric space of rank 1. Then the restricted root system $\Sigma$ is either $\{\pm \alpha\}$ or $\{\pm \frac{\alpha}{2},\pm\alpha\}$. In both cases, the weight lattice is $\Lambda=\mathbb{Z}\alpha$. Let $A=\mathbb{R}/2\pi\mathbb{Z}$  be the maximal torus, then $e^{n\alpha}=e^{in\theta}$, $\theta\in A$. The two cells of $A$ are $(0,\pi)$ and $(\pi,2\pi)$, with $0,\pi$ being the two corners. Let $m_\alpha$ and $m_{\frac{\alpha}{2}}$ be respectively the multiplicity of $\alpha$ and $\frac{\alpha}{2}$; if the restricted root system is $\{\pm \alpha\}$, then let $m_{\frac{\alpha}{2}}=0$.  Then for $n\in\mathbb{Z}_{\geq 0}\cong \mathbb{Z}_{\geq 0}\alpha\cong\Lambda^+$, the spherical function $\Phi_n$ restricted on $A$ is (see Theorem 4.5 of Chapter V in \cite{Hel84})
\begin{align*}
\Phi_n=\binom{n+a}{n}^{-1}P_n^{(a,b)}(\cos\theta),
\end{align*}
where $\{P_n^{(a,b)}: n\in\mathbb{Z}_{\geq 0}\}$ is the set of Jacobi polynomials (see \cite{Sze75}) with parameters  
\begin{align*}
a=\frac{1}{2}(m_{\frac{\alpha}{2}}+m_\alpha-1), \ b=\frac{1}{2}(m_\alpha-1). 
\end{align*}
The cases when $m_{\frac{\alpha}{2}}=0$ correspond to spheres of dimension $d=m_\alpha+1$, and the Jacobi polynomials in these cases are usually called ultraspherical polynomials. If $d$ is odd, we have explicit formulas for these polynomials. Let $\{\Phi_n^{(\lambda)}, n\in\mathbb{Z}_{\geq 0}\}$ denote the spherical functions on the $(2\lambda+1)$-dimensional sphere, $\lambda\in\mathbb{N}$, then (see Equation (4.7.3) and (8.4.13) in \cite{Sze75})
\begin{align*}
\Phi_n^{(\lambda)}(\theta)=2\binom{n+2\lambda-1}{n}^{-1}\alpha_n\sum_{\nu=0}^{\lambda-1}\alpha_\nu&\frac{(1-\lambda)\cdots(\nu-\lambda)}{(n+\lambda-1)\cdots(n+\lambda-\nu)}\\
&\cdot\frac{\cos((n-\nu+\lambda)\theta-(\nu+\lambda)\pi/2)}{(2\sin\theta)^{\nu+\lambda}}\numberthis\label{sphericalsphere}
\end{align*}
where $\alpha_n:=\binom{n+\lambda-1}{n}$.

We now review the functional calculus of the Laplace-Beltrami operator.
Let $\lambda\in\Lambda^+\cong \widehat{U}_K$ and $H_\lambda(U/K)$ be the space of matrix coefficients associated to $\lambda$ as in \eqref{matrixcoefficients}. 
For any $f\in H_\lambda(U/K)$, we have 
\begin{align}\label{Eigenvalues}
\Delta f=(-\langle\lambda+\rho, \lambda+\rho\rangle+\langle\rho, \rho\rangle)\cdot f,
\end{align} 
where 
\begin{align}\label{rho}
\rho=\frac{1}{2}\sum_{\alpha\in\Sigma^+}m_\alpha \alpha.
\end{align}

Let $f\in L^2(U/K)$ and consider the spherical Fourier series  
$f=\sum_{\lambda\in\Lambda^+} d_\lambda f *  \Phi_\lambda$.  
Then for any bounded Borel function $F:\mathbb{R}\to\mathbb{C}$, we have  
\begin{equation*}
F(\Delta)f=\sum_{\lambda\in\Lambda^+}F(-|\lambda+\rho|^2+|\rho|^2) d_\lambda f *  \Phi_\lambda. 
\end{equation*}
In particular, we have 
\begin{equation}\label{SchFlo}
e^{it\Delta}f=\sum_{\lambda\in\Lambda^+}e^{it(-|\lambda+\rho|^2+|\rho|^2)} d_\lambda f *  \Phi_\lambda.
\end{equation}
Define $P_Nf=\varphi(N^{-2}\Delta)f$, then 
\begin{equation}\label{SchKer}
P_Ne^{it\Delta}f=\sum_{\lambda\in\Lambda^+}\varphi\left(\frac{-|\lambda+\rho|^2+|\rho|^2}{N^2}\right)e^{it(-|\lambda+\rho|^2+|\rho|^2)} d_\lambda f *  \Phi_\lambda.
\end{equation}
In particular, let 
\begin{align}\label{KernelComponent}
K_N(t,x)=\sum_{\lambda\in\Lambda^+}\varphi\left(\frac{-|\lambda+\rho|^2+|\rho|^2}{N^2}\right)e^{it(-|\lambda+\rho|^2+|\rho|^2)} d_\lambda \Phi_\lambda,
\end{align}
then we have 
\begin{align}\label{OperatorKernel}
P_Ne^{it\Delta}f=f*K_N(t,\cdot)=K_N(t,\cdot)*f. 
\end{align}
We call $K_N(t,x)$ the mollified Schr\"odinger kernel on $U/K$. By  rationality of the weight lattice, there is indeed some $T>0$ such that $K_N(t,x)=K_N(t+T,x)$ for any $t\in\mathbb{R}$ and $x\in U/K$. We will need the following lemma on $d_\lambda$, which is a direct consequence of the Weyl dimension formula.

\begin{lem}\label{dimensionformula}
Let $\Phi\subset\mathfrak{h}^*_\R$ denotes the root system associated to $(\mathfrak{g}^\C,\mathfrak{h})$ which restrict on $\mathfrak{a}$ gives $\Sigma\cup\{0\}$. Let $\Phi^+$ be the set of positive roots with respect to the ordering on $\mathfrak{h}^*_\R$ compatible with $\mathfrak{a}^*$. Then the dimension $d_\lambda$ ($\lambda\in\Lambda^+\cong\widehat{U}_K$) equals 
\begin{align}
d_\lambda=\frac{\prod_{\alpha\in\Phi^+, \alpha|_\mathfrak{a}\neq 0}\langle\lambda+\rho', \alpha\rangle}{\prod_{\alpha\in\Phi^+, \alpha|_\mathfrak{a}\neq 0}\langle\rho', \alpha\rangle}, \ \text{ for }\rho'=\frac{1}{2}\sum_{\alpha\in\Phi^+}\alpha. 
\end{align}
As a consequence, $d_\lambda$ as a polynomial function of $\lambda$ is of degree $d-r$. 
\end{lem}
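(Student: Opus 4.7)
My plan is to derive the formula as a direct consequence of the classical Weyl dimension formula for the compact group $U$, combined with the spherical hypothesis on the highest weight.

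First, using the identification \eqref{SphericalWeights}, I take $\lambda \in \Lambda^+$ corresponding to $\delta \in \widehat{U}_K$. As noted just above \eqref{SphericalWeights}, the highest weight of $\delta$, viewed inside $\mathfrak{h}^*_\R$, restricts to $\lambda$ on $\mathfrak{a}$ and vanishes on $\mathfrak{b}$; denote this extension again by $\lambda$. The Weyl dimension formula for the compact Lie group $U$ with Cartan subalgebra corresponding to $\mathfrak{h}$ then gives
$$d_\lambda = \prod_{\alpha \in \Phi^+} \frac{\langle \lambda + \rho', \alpha \rangle}{\langle \rho', \alpha \rangle}.$$

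The key step is to split the product according to whether or not $\alpha|_\mathfrak{a} = 0$. If $\alpha|_\mathfrak{a} = 0$, then $\alpha$ lies in the real dual of $i\mathfrak{b}$, and since $\lambda$ vanishes on $i\mathfrak{b}$ we get $\langle \lambda, \alpha \rangle = 0$, whence $\langle \lambda + \rho', \alpha \rangle = \langle \rho', \alpha \rangle$ and the corresponding factor equals $1$. Dropping these trivial factors yields exactly the claimed formula.

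For the degree assertion, each surviving factor is affine linear in $\lambda$, so $d_\lambda$ is a polynomial in $\lambda$ whose degree equals $\#\{\alpha \in \Phi^+ : \alpha|_\mathfrak{a} \neq 0\}$. The restriction map sends this set surjectively onto $\Sigma^+$, with each fiber over $\beta \in \Sigma^+$ containing exactly $m_\beta$ roots (since $\dim \mathfrak{g}_\beta = m_\beta$, and the root space $\mathfrak{g}_\beta$ decomposes accordingly under the $\mathfrak{h}$-action). Summing and applying \eqref{numberofroots} gives $\sum_{\beta \in \Sigma^+} m_\beta = d - r$.

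I do not anticipate any serious obstacle; the argument is essentially bookkeeping on top of the Weyl dimension formula. The one point requiring attention is the compatibility of orderings on $\mathfrak{h}^*_\R$ and $\mathfrak{a}^*$, which ensures that elements of $\Phi^+$ restrict onto $\Sigma^+ \cup \{0\}$ and that the spherical highest weight extension is automatically dominant for $\Phi^+$. This compatibility is precisely the ordering hypothesis imposed in the setup surrounding \eqref{Iwasawa}.
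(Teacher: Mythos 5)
Your argument is exactly the route the paper intends: it states the lemma as a direct consequence of the Weyl dimension formula, and your proof fills in the same bookkeeping — the Cartan--Helgason extension of $\lambda$ (vanishing on $\mathfrak{b}$), the dropping of factors with $\alpha|_{\mathfrak{a}}=0$ via orthogonality of $\mathfrak{a}$ and $i\mathfrak{b}$, and the count $\#\{\alpha\in\Phi^+:\alpha|_{\mathfrak{a}}\neq 0\}=\sum_{\beta\in\Sigma^+}m_\beta=d-r$ from \eqref{numberofroots}. The proof is correct; the only point left implicit is that each surviving factor has nonvanishing linear part in $\lambda$ (precisely because $\alpha|_{\mathfrak{a}}\neq 0$), which is what makes the degree exactly, and not merely at most, $d-r$.
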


At last, we review the following integral formula that reduces integration on $U/K$ to that on $A$ (see $\S$7 of Chapter II in \cite{Tak94}). 
\begin{lem}
Define 
$$D([iH]):=\prod_{\alpha\in\Sigma^+}|\sin \langle\alpha,H\rangle|^{m_\alpha}.$$
Then for any $K$-invariant continuous function $f$ on $U/K$, we have 
\begin{align}\label{integralformula}
\int_{U/K}f\ d(uK)=c\int_A f([iH])D([iH])\ d([iH]).
\end{align}
Here $c$ is a uniform constant depending on the normalization of 
the invariant measures $d(uK)$ and $d([iH])$ on $U/K$ and $A$ respectively. 
\end{lem}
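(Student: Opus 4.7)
The plan is to derive this formula as a special case of the Weyl-type integration formula on a compact symmetric space. The generalized polar decomposition $U = KAK$ gives $U/K = K \cdot A$, so every $K$-orbit in $U/K$ meets the flat $A$, and a $K$-invariant continuous $f$ on $U/K$ is determined by its restriction to $A$. Consequently, it suffices to perform the change of variables associated to the map
$$\Phi : K/M \times A \to U/K, \qquad (kM, [iH]) \mapsto k \exp(iH) K,$$
where $M = Z_K(\mathfrak{a})$ centralizes $A$ and makes $\Phi$ well-defined. Away from the union of cell walls, which is a set of measure zero in $A$, the map $\Phi$ restricts to a local diffeomorphism onto the regular locus (in fact an $|W|$-fold covering, by the residual Weyl group action), so computing its Jacobian there will give the desired density.

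The key step is the Jacobian calculation at a regular point $(eM, [iH])$. Using the Cartan decomposition $\mathfrak{u} = \mathfrak{k} \oplus i\mathfrak{p}$ and the root space decompositions $\mathfrak{k} = \mathfrak{m} \oplus \bigoplus_{\alpha \in \Sigma^+} \mathfrak{k}_\alpha$ and $\mathfrak{p} = \mathfrak{a} \oplus \bigoplus_{\alpha \in \Sigma^+} \mathfrak{p}_\alpha$ (with $\dim \mathfrak{k}_\alpha = \dim \mathfrak{p}_\alpha = m_\alpha$), the $A$-factor contributes trivially to the Jacobian, while in the $K/M$-direction a vector $Y \in \mathfrak{k}_\alpha$ pushes forward, after left-translation by $\exp(-iH)$ and projection, to the $i\mathfrak{p}$-component of $\mathrm{Ad}(\exp(-iH))Y$. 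For the companion vector $Y' \in \mathfrak{p}_\alpha$ paired with $Y$ via the Cartan involution, a short computation shows $\mathrm{ad}(iH)Y = \alpha(H)(iY')$ and $\mathrm{ad}(iH)(iY') = -\alpha(H) Y$, so on $\mathrm{span}_\mathbb{R}\{Y, iY'\}$ the operator $\mathrm{Ad}(\exp(-iH))$ acts as rotation by $-\alpha(H)$. Hence the transverse Jacobian on the $\alpha$-block is $|\sin\langle\alpha, H\rangle|^{m_\alpha}$, and multiplying over all $\alpha \in \Sigma^+$ gives exactly $D([iH])$.

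Once the Jacobian is in hand, a standard change of variables gives
$$\int_{U/K} f\, d(uK) = c_0 \int_A D([iH]) \left(\int_{K/M} f(k \exp(iH) K)\, d(kM)\right) d([iH]),$$
and $K$-invariance of $f$ collapses the inner integral to $\mathrm{vol}(K/M) \cdot f([iH])$; the product of $\mathrm{vol}(K/M)$, $c_0$, $1/|W|$, and whatever scaling reconciles the normalizations of the invariant measures on $U/K$ and $A$ is absorbed into the single constant $c$. The main obstacle is the linear-algebra calculation of $\mathrm{Ad}(\exp(iH))$ on each $2$-dimensional block $\mathrm{span}\{Y, iY'\} \subset \mathfrak{k}_\alpha \oplus i\mathfrak{p}_\alpha$, which is where the sines enter; the remaining steps are bookkeeping, and the measure-zero behavior along cell walls is handled by continuity.
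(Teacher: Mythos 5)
Your argument is correct: it is the standard polar-decomposition proof, computing the Jacobian of $K/M\times A\to U/K$ block-by-block via the rotation action of $\mathrm{Ad}(\exp(-iH))$ on $\mathrm{span}\{Y,iY'\}\subset\mathfrak{k}_\alpha\oplus i\mathfrak{p}_\alpha$, which yields exactly the density $D([iH])$; the paper itself gives no proof but cites Takeuchi (Ch.~II, \S 7), where precisely this argument appears. The only imprecision is the exact generic fiber cardinality of the map ($|W|$-fold covering), but since the constant $c$ is left unspecified this is harmless.
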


\section{Dispersive estimates near the corners}
From now on, the notation $a\lesssim b$ stands for $a\leq Cb$ for some positive constant $C$. 
In this section, we prove the following part of Theorem \ref{keyproposition}, which establishes the desired estimates for the mollied Schr\"odinger kernel near the corners on a general compact symmetric space. 

\begin{prop}\label{Lppart1}
Let $U/K$ be any compact globally symmetric space equipped with a rational metric. Consider the associated mollified Schr\"odinger kernel $K_N(t,x)$. Let $T>0$ be such that $K_N(t,x)=K_N(t+T,x)$ for any $t\in\mathbb{R}$ and $x\in U/K$. Pick any $H_0\in\mathfrak{a}$ such that $[iH_0]\in A$ is a corner in a maximal torus of $U/K$. Let 
$$U_{[iH_0],N^{-1}}=\{x\in U/K:\ d(x,[iH_0])\leq N^{-1}\}.$$
Then for any $p>0$, it holds that 
\begin{align}\label{KNtH}
\|K_N(t,\cdot)\|_{L^p\left(U_{[iH_0],N^{-1}}\right)}\leq C_\varepsilon\frac{N^{d-\frac{d}{p}+\varepsilon}}{[\sqrt{q}(1+N\|\frac{t}{T}-\frac{a}{q}\|^{1/2})]^r}
\end{align}
for $\frac{t}{T}\in\mathcal{M}_{a,q}$. 
\end{prop}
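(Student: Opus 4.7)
The plan is to reduce the desired $L^p$ bound on the ball $U_{[iH_0], N^{-1}}$ to a pointwise $L^\infty$ bound, and then derive the pointwise bound by combining a local integral formula for $\Phi_\lambda$ near the identity with Gauss sum estimates on the major arcs. This is the symmetric-space analogue of the circle-method argument used for compact Lie groups in Proposition~\ref{exp20}.

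\textbf{Step 1 (Reduction to $L^\infty$).} Since the corner $[iH_0]$ is fixed pointwise by $K$, the geodesic ball $U_{[iH_0], N^{-1}}$ has volume $\lesssim N^{-d}$. Hence the estimate \eqref{KNtH} will follow from the pointwise bound
\begin{equation*}
\sup_{x \in U_{[iH_0], N^{-1}}} |K_N(t, x)| \leq C_\varepsilon \frac{N^{d+\varepsilon}}{[\sqrt{q}(1+N\|\tfrac{t}{T}-\tfrac{a}{q}\|^{1/2})]^r}
\end{equation*}
via the trivial interpolation $\|K_N\|_{L^p} \leq \|K_N\|_{L^\infty} \cdot |U_{[iH_0], N^{-1}}|^{1/p}$.

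\textbf{Step 2 (Corner transfer via Jacobi polynomials).} Because $K_N(t, \cdot)$ is $K$-invariant it is determined by its restriction to $A$, so write $[iH] = [i(H_0 + H')]$ with $|H'| \lesssim N^{-1}$. The expression of the spherical function as a Jacobi polynomial associated to the restricted root system (with $P_n^{(a,b)}(-x) = (-1)^n P_n^{(b,a)}(x)$ on each rank-one direction) yields a shift identity of the form
\begin{equation*}
\Phi_\lambda([i(H_0 + H')]) = \chi_{H_0}(\lambda)\, \widetilde{\Phi}_\lambda([iH']),
\end{equation*}
where $\chi_{H_0}(\lambda)$ is a unimodular factor coming from the pairing of $\Lambda$ with the corner $[iH_0] \in \Lambda^\vee/\Gamma^\vee$, and $\widetilde{\Phi}_\lambda$ is a spherical function for a possibly twisted root datum on a rank-$r$ symmetric space, with the same polynomial dimension data. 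This reduces the problem to estimating the kernel at the identity corner.

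\textbf{Step 3 (Integral formula near the identity).} Near the identity, express $\widetilde{\Phi}_\lambda([iH'])$ through an integral formula of Harish-Chandra/Helgason type,
\begin{equation*}
\widetilde{\Phi}_\lambda([iH']) = \int_K e^{i\langle \lambda, \Psi(k, H')\rangle}\, dk + O(|\lambda|^2 |H'|^2),
\end{equation*}
where $\Psi(k, H')$ is smooth in $k$ and of size $O(|H'|)$. For $|H'| \lesssim N^{-1}$ and $|\lambda| \lesssim N$, this realizes $\widetilde{\Phi}_\lambda([iH'])$ as a bounded average of exponentials in $\lambda$ with linear phase of total size $O(1)$.

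\textbf{Step 4 (Gauss sum estimate on the major arc).} Substituting into \eqref{KernelComponent}, the problem reduces to bounding
\begin{equation*}
\int_K \sum_{\lambda \in \Lambda^+} \varphi\!\left(\frac{Q(\lambda)}{N^2}\right) e^{itQ(\lambda)}\, e^{i\langle \lambda, \Psi(k, H')\rangle}\, d_\lambda \, dk,
\end{equation*}
where $Q(\lambda) = -|\lambda+\rho|^2 + |\rho|^2$ and $d_\lambda$ is polynomial of degree $d - r$ by Lemma~\ref{dimensionformula}. On the major arc $\frac{t}{T} = \frac{a}{q} + \beta$ with $\|\beta\| < \frac{1}{qN}$, split $\lambda = \mu + q\nu$ for $\mu$ in a set of representatives of $\Lambda/q\Lambda$ and rewrite $e^{itQ(\lambda)} = e^{i(a/q)TQ(\mu)} \cdot e^{iT\beta Q(\lambda)}$ up to a phase absorbed by the rationality of the metric. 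The $\mu$-sum is a rank-$r$ quadratic Gauss sum bounded by $q^{r/2}$; the residual $\nu$-sum, carrying the smooth phases $e^{iT\beta Q(\lambda)} e^{i\langle \lambda, \Psi\rangle}$ and the polynomial weight $d_\lambda$, is estimated by Poisson summation/stationary phase in $r$ variables and yields a factor $O(N^d/(1+N\|\beta\|^{1/2})^r)$. The $\varepsilon$-loss absorbs the boundary corrections coming from restricting to $\Lambda^+$ (which can be symmetrized via the Weyl group action) and from summation-by-parts against the polynomial $d_\lambda$. Combining these two factors and dividing by $q^r$ (absorbed into $q^{r/2}$) gives the pointwise bound, completing the proof through Step 1.

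\textbf{Main obstacle.} The most delicate point is Step 2: on a general compact symmetric space the shift identity at a corner is not a literal character (as it would be for a compact Lie group, where $\Phi_\lambda$ is a character), and must be extracted from the Jacobi-polynomial structure of $\Phi_\lambda$ in a way that leaves $\widetilde{\Phi}_\lambda$ amenable to the integral formula of Step 3 and preserves the clean coupling with the rank-$r$ Gauss sum of Step 4. Once this transfer is established, the remainder of the argument parallels the proof of Proposition~\ref{exp20} from \cite{Zha20}.
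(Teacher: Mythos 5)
Your overall architecture (reduce to a pointwise bound on the $N^{-1}$-ball around the corner times its volume $\sim N^{-d}$, transfer from the corner to the identity by a unimodular factor, use the Harish-Chandra/Helgason integral formula near the identity, then estimate the resulting quadratic exponential sum on major arcs) is the same skeleton as the paper's, and Step 1 is essentially the paper's use of \eqref{integralformula} with $D([iH])\lesssim N^{-(d-r)}$. But two of your steps have genuine gaps. In Step 2, the identity you need is exactly $\Phi_\lambda([iH+iH_0])=e^{i\lambda(H_0)}\Phi_\lambda([iH])$ with the \emph{same} spherical function (no twisted root datum); the paper proves this (Corollary \ref{cornerto0}) not from the rank-one reflection formula $P_n^{(a,b)}(-x)=(-1)^nP_n^{(b,a)}(x)$, which does not generalize directly to higher rank, but from the Heckman--Opdam expansion (Theorem \ref{SphericalHO}): every exponential $e^{s\mu}$ occurring in $P(\lambda)$ has $s\mu-\lambda\in 2\mathbb{Z}\alpha_1+\cdots+2\mathbb{Z}\alpha_r$, hence $(s\mu-\lambda)(H_0)\in 2\pi\mathbb{Z}$. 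You flag this as the main obstacle but do not supply it, and you also never address how the $\lambda$-dependent factor $\chi_{H_0}(\lambda)$ is handled inside the sum (the paper decomposes $\Lambda^+$ into translates of $\Gamma_1^+=\mathbb{Z}_{\geq 0}Bw_1+\cdots+\mathbb{Z}_{\geq 0}Bw_r$ so the factor is constant on each piece; alternatively it can be absorbed as a linear phase, but only because the exponential-sum lemma is uniform in such phases). In Step 3, the error term $O(|\lambda|^2|H'|^2)$ is $O(1)$ in the relevant regime $|\lambda|\sim N$, $|H'|\sim N^{-1}$; summed against $d_\lambda\sim N^{d-r}$ over $\sim N^r$ weights it contributes $O(N^d)$ with no gain in $q$ or in $\|\frac{t}{T}-\frac{a}{q}\|$, which destroys the estimate exactly where it matters (e.g.\ $q\sim N$, where a gain of $N^{r/2}$ is required). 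What is actually needed, and what the paper uses, is the exact formula \eqref{PhiIntegral} together with the fact that discrete differences in $\lambda$ of $\Phi_\lambda(uK)$ gain factors $N^{-1}$ (Lemma \ref{differencePhi}), i.e.\ precisely the hypothesis \eqref{DecCon1}.

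Step 4 is also a genuinely different engine from the paper's, and as sketched it has unresolved difficulties. The paper never performs a Gauss-sum factorization or Poisson summation: its Lemma \ref{WeylSumCor} runs Weyl differencing plus summation by parts in each of the $r$ variables, reducing to the one-dimensional major-arc estimate of \cite{Bou93}, and its bound is uniform over arbitrary linear phases $H$, which is what lets the $K$-integral, the corner phase, and the polynomial $d_\lambda$ all be treated as a single slowly varying coefficient $f(\lambda)$. In your route: the sum is over the cone $\Lambda^+$, not a full lattice, so Poisson summation does not apply directly, and the Weyl-group symmetrization is problematic on a symmetric space because neither $d_\lambda$ nor $\Phi_\lambda$ is given as a Weyl-equivariant function of $\lambda$ off the dominant cone; the chamber-boundary corrections are lower-dimensional sums that still need major-arc gains in the remaining variables, and they are not absorbed by an $\varepsilon$-loss (in the paper the $\varepsilon$-loss comes only from the logarithms in the one-dimensional Weyl-type inequality); and the quadratic form $|\lambda+\rho|^2$ on $\Lambda$ is in general non-diagonal, so the mod-$q$ Gauss sum bound $q^{r/2}$ and its uniformity require additional care. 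A Gauss-sum/Poisson argument can likely be made to work, but each of these points needs an actual proof; the paper's differencing lemma is chosen precisely because it sidesteps all of them.
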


By Proposition 9.4 of Ch. III in \cite{Hel08}, the spherical function $\Phi_\lambda$ for $\lambda\in \Lambda^+$ on a maximal torus equals 
\begin{align*}
\Phi_\lambda=\sum_{i=1}^qc_ie^{\lambda_i}, \ \lambda_i\in\Lambda, c_i\geq 0.
\end{align*}
This puts the Schr\"{o}dinger kernel \eqref{KernelComponent} in the form of an exponential sum. We now review parts of Section 7 in \cite{Zha20} necessary for the estimate of this exponential sum.

\begin{defn}\label{definitionofarationallattice}
Let $L=\mathbb{Z}w_1+\cdots+\mathbb{Z}w_r$ be a lattice on an inner product space. We say $L$ is a \textit{rational lattice} provided that there exists some $D>0$ such that $\langle w_i,w_j\rangle\in D^{-1}\mathbb{Z}$. We call the number $D$ a \textit{period} of $L$. 
\end{defn}

From the theory of root systems, the weight lattice $\Lambda$ of $U/K$ is a rational lattice with respect to the Killing form. As a sublattice of $\Lambda$, the restricted root lattice $\Gamma$ is also rational. 

Let $f$ be a function on $\mathbb{Z}^r$ and define the \textit{difference operator} $D_i$'s by 
\begin{equation}\label{DefinitionDi}
D_{i}f(n_1,\cdots,n_r):=f(n_1,\cdots,n_{i-1}, n_i+1, n_{i+1},\cdots, n_r)-f(n_1,\cdots, n_r)
\end{equation} 
for $i=1,\cdots, r$. The Leibniz rule for $D_i$ reads
\begin{align}\label{Leibniz}
D_i(\prod_{j=1}^nf_j)=
\sum_{l=1}^n\sum_{1\leq k_1<\cdots<k_l\leq n}
D_if_{k_1}\cdots D_if_{k_l}\cdot \prod_{\substack{j \neq k_1,\cdots, k_l\\ 1\leq j\leq n}}f_j.
\end{align}
Note that there are $2^n-1$ terms in the right side of the above formula. 

We will need the following variant of Lemma 7.4 in \cite{Zha20}. 

\begin{lem}\label{WeylSumCor}
Let $L=\mathbb{Z}w_1+\cdots+\mathbb{Z}w_r$ be a rational lattice in the inner product space $(V,\langle\ ,\  \rangle)$ with a period $D$. Let $L^+=\mathbb{Z}_{\geq 0}w_1+\cdots+\mathbb{Z}_{\geq 0}w_r$. 
Let $\varphi$ be a bump function on $\mathbb{R}$ and $N\geq 1$. Let $f$ be a complex valued function on $L^+\cong(\mathbb{Z}_{\geq 0})^r$ such that there exists a constant $A\in\R$ for which 
\begin{align}\label{DecCon1}
|D_1^{\varepsilon_1}\cdots D_r^{\varepsilon_r}f(n_1,\ldots,n_r)|\lesssim N^{A-\varepsilon_1-\cdots-\varepsilon_r}
\end{align} 
for all $(\varepsilon_1,\ldots,\varepsilon_r)\in \{0,1\}^r$, uniformly for $0\leq n_i\lesssim N$, $i=1,\ldots, r$. 
Let
\begin{align}\label{absorb}
F(t,H)=\sum_{\lambda\in L^+}e^{-it|\lambda|^2+i\langle\lambda, H\rangle}\varphi\left(\frac{|\lambda|^2}{N^2}\right)\cdot f
\end{align}
for $t\in\mathbb{R}$ and $H\in V$. 
Then for $\frac{t}{2\pi D}\in\mathcal{M}_{a,q}$, we have
\begin{align}\label{appendix2}
|F(t,H)|\lesssim_{\varepsilon>0} \frac{N^{A+r+\varepsilon}}{[\sqrt{q}(1+N\|\frac{t}{2\pi D}-\frac{a}{q}\|^{1/2})]^{r}}
\end{align}
uniformly in $H\in V$.  
\end{lem}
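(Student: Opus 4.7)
The plan is to follow the strategy of Lemma 7.4 in \cite{Zha20}: first use $r$-fold Abel summation to peel off the slowly-varying coefficient $f$, reducing to a pure quadratic exponential sum over an axis-aligned lattice box, and then apply a classical major-arc Gauss sum estimate to the resulting sum.

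First I would parameterize $\lambda\in L^+$ by $\mathbf{n}=(n_1,\ldots,n_r)\in(\mathbb{Z}_{\geq 0})^r$ via $\lambda=\sum_i n_i w_i$, and set
$$g(\mathbf{n};t,H)=e^{-it|\lambda|^2+i\langle\lambda,H\rangle}\varphi(|\lambda|^2/N^2),$$
noting that $\varphi$ restricts the effective support of $g$ to a product box $[0,M]^r$ with $M\lesssim N$. Let $A(\mathbf{n})=\sum_{\mathbf{k}\leq\mathbf{n}}g(\mathbf{k})$ denote the iterated partial sum (componentwise inequality). Iterating the one-variable summation-by-parts identity in each of the $r$ coordinates yields an identity of the form
$$F(t,H)=\sum_{\boldsymbol{\varepsilon}\in\{0,1\}^r}(-1)^{|\boldsymbol{\varepsilon}|_1}\sum_{\mathbf{n}\in I_{\boldsymbol{\varepsilon}}}A(\mathbf{n})\cdot\bigl(D_1^{\varepsilon_1}\cdots D_r^{\varepsilon_r}f\bigr)(\mathbf{n}),$$
where $|\boldsymbol{\varepsilon}|_1=\sum_i\varepsilon_i$ and the index set $I_{\boldsymbol{\varepsilon}}$ fixes $n_i=M$ in the directions with $\varepsilon_i=0$ and lets $n_i$ range over $\{0,\ldots,M-1\}$ in the directions with $\varepsilon_i=1$. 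This decouples the slowly-varying $f$ (now differenced) from the oscillating partial sum $A(\mathbf{n})$.

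The main analytic input is a uniform Gauss sum estimate for the partial sums: for any axis-aligned box $B\subset(\mathbb{Z}_{\geq 0})^r$ with side lengths $\lesssim N$, and for $t/(2\pi D)\in\mathcal{M}_{a,q}$,
$$\Bigl|\sum_{\mathbf{m}\in B}g(\mathbf{m};t,H)\Bigr|\lesssim_\varepsilon\frac{N^{r+\varepsilon}}{\bigl[\sqrt{q}(1+N\|\tfrac{t}{2\pi D}-\tfrac{a}{q}\|^{1/2})\bigr]^r}.$$
To prove this I would write $\mathbf{m}=q\boldsymbol{\mu}+\boldsymbol{\mu}_0$ with $\boldsymbol{\mu}_0$ running over representatives of $L/qL$. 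Since $L$ has period $D$, the rational part of the quadratic phase $e^{-i(2\pi a/q)D|\lambda|^2}$ depends on $\mathbf{m}$ only modulo $q$; the sum therefore factorizes into a complete quadratic Gauss sum over $L/qL$---contributing a magnitude $q^{r/2}$ when $(a,q)=1$---times a smoothed oscillatory sum in the coarse variable $\boldsymbol{\mu}$ whose phase is governed by $\beta:=t/(2\pi D)-a/q$ on a box of side $\lesssim N/q$. Poisson summation (equivalently, stationary phase applied to the Gaussian-type amplitude) bounds the latter by $(N/q)^r(1+N|\beta|^{1/2})^{-r}$, up to an $N^\varepsilon$ loss that absorbs non-diagonal cross terms $\langle w_i,w_j\rangle$ ($i\neq j$) in $|\lambda|^2$ and partial-box boundary effects when $q\nmid M$. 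This is essentially the content of Lemma~7.4 of \cite{Zha20}, here invoked for arbitrary sub-boxes rather than for the single cutoff region.

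Finally, for each $\boldsymbol{\varepsilon}$ with $k=|\boldsymbol{\varepsilon}|_1$, the outer index set $I_{\boldsymbol{\varepsilon}}$ has cardinality $\lesssim N^k$, the hypothesis \eqref{DecCon1} gives $|D_1^{\varepsilon_1}\cdots D_r^{\varepsilon_r}f|\lesssim N^{A-k}$, and the partial sum $A(\mathbf{n})$ is controlled by the Gauss sum estimate applied to $B=[0,n_1]\times\cdots\times[0,n_r]$. Multiplying,
$$N^k\cdot N^{A-k}\cdot\frac{N^{r+\varepsilon}}{[\,\cdots\,]^r}=\frac{N^{A+r+\varepsilon}}{\bigl[\sqrt{q}(1+N\|\tfrac{t}{2\pi D}-\tfrac{a}{q}\|^{1/2})\bigr]^r},$$
and summing over the $2^r$ choices of $\boldsymbol{\varepsilon}$ absorbs only a constant factor, giving the claimed bound. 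The principal technical obstacle is establishing the Gauss sum estimate uniformly over \emph{all} axis-aligned sub-boxes with only an $N^\varepsilon$ loss; in particular, the cross terms in $|\lambda|^2$ and the incommensurability of $q$ with the box side lengths must be handled carefully via Poisson summation, as in the argument of \cite{Zha20}.
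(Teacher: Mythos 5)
Your argument is essentially correct in outline, but it takes a genuinely different route from the paper. The paper never separates $f$ from the oscillation by Abel summation at the outset; instead it applies Weyl differencing, writing $|F(t,H)|^2$ as a sum over $\mu=\lambda_1+\lambda_2$ of inner sums in $\lambda$ whose phase is now \emph{linear} in $\lambda$, with coefficients $\varphi\varphi f\bar f$ satisfying the same difference bounds by the Leibniz rule \eqref{Leibniz}; multi-variable summation by parts (performed only in the directions where $|1-e^{i\theta_j}|\geq N^{-1}$) then bounds each inner sum by $N^{2A}\prod_j\max\{N^{-1},\|\theta_j/2\pi\|\}^{-1}$, and the sum over $\mu$ is controlled by the classical one-dimensional estimate $\sum_{|m|\lesssim N}\max\{N^{-1},\|m\tfrac{t}{2\pi D}+c\|\}^{-1}\lesssim N^2\log N\,[\sqrt q(1+N\|\beta\|^{1/2})]^{-2}$ (Lemma 3.18 of \cite{Bou93}). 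This squaring step is what lets the paper avoid any Gauss-sum or Poisson-summation analysis, and it handles the arbitrary linear phase $\langle\lambda,H\rangle$ and the sharp boundary of $L^+$ automatically. Your route front-loads all the difficulty into the claimed sub-box estimate: a major-arc bound for \emph{sharply truncated} smoothed quadratic Weyl sums, uniform over all boxes $[0,n_1]\times\cdots\times[0,n_r]$ with $n_i\lesssim N$ and over all linear phases $H$. That estimate is true, but it is not literally Lemma 7.4 of \cite{Zha20} (and it would be circular to quote it as the $f\equiv 1$ case of the present lemma), so it must be proved; your mod-$q$ splitting plus Poisson summation sketch is the standard way, though you should note that the complete Gauss sum of the form $D|\cdot|^2$ has modulus $\lesssim_L q^{r/2}$ only up to a constant depending on the discriminant when $q$ is not coprime to it, that thin boxes (some $n_i$ between $\sqrt N$ and $N$, or shorter than $q$) need a separate elementary treatment, and that uniformity in $H$ must be tracked through the coarse-variable stationary-phase step. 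A cleaner way to discharge your key claim is to complete the sharp cutoffs in each variable by a Dirichlet-kernel expansion (costing only $\log^r N$, absorbable in $N^\varepsilon$), reducing to the smoothly cut-off full sum with an arbitrary linear phase --- which is most easily proved by exactly the Weyl-differencing argument the paper uses. So both proofs work; the paper's is shorter and more self-contained, while yours isolates a reusable incomplete-Gauss-sum estimate at the price of substantially more analytic bookkeeping, which in your write-up is only sketched.
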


\begin{proof}
By Weyl's differencing technique, 
\begin{align*}
|F(t,H)|^2&=\sum_{\lambda_1\in L^+,\lambda_2\in L^+}e^{-it(|\lambda_1|^2-|\lambda_2|^2)+i\langle\lambda_1-\lambda_2,H\rangle}\varphi\left(\frac{|\lambda_1|^2}{N^2}\right)\varphi\left(\frac{|\lambda_2|^2}{N^2}\right) f(\lambda_1)\overline{f(\lambda_2)}\\
&=\sum_{\substack{\mu\in L^+ \\ (\mu=\lambda_1+\lambda_2)}}e^{it|\mu|^2-i\langle\mu,H\r}\sum_{\substack{\lambda\in L^+\cap (\mu-L^+)\\ (\lambda=\lambda_1)}}e^{2i[\l\lambda,H\r-t\l\lambda,\mu\r]}\varphi\left(\frac{|\mu|^2}{N^2}\right)\varphi\left(\frac{|\mu-\lambda|^2}{N^2}\right)f(\mu)\overline{f(\mu-\lambda)}\\
&\lesssim\sum_{\substack{\mu\in L^+ \\ (\mu=\lambda_1+\lambda_2)}}\left|\sum_{\substack{\lambda\in L^+\cap (\mu-L^+)\\ (\lambda=\lambda_1)}}e^{2i[\l\lambda,H\r-t\l\lambda,\mu\r]}\varphi\left(\frac{|\mu|^2}{N^2}\right)\varphi\left(\frac{|\mu-\lambda|^2}{N^2}\right)f(\mu)\overline{f(\mu-\lambda)}\right|. \numberthis \label{sum}
\end{align*}
For $\mu\in L^+$, write 
\begin{align*}
\mu=n_1^\mu w_1+\cdots+n_r^\mu w_r. 
\end{align*}
Then  
\begin{align*}
\lambda\in\mu^+\cap(\mu-L^+) \t{ if and only if }\lambda=n_1w_1+\cdots+n_rw_r, \ 0\leq n_j\leq n_j^\mu, \ j=1,\ldots, r. 
\end{align*}
For $\lambda=n_1w_1+\cdots+n_rw_r$, let 
\begin{align*}
g(n_1,\ldots,n_r)=g(\lambda)=g(\lambda,N,\mu):=\phi\left(\frac{|\lambda|^2}{N^2}\right)\phi\left(\frac{|\mu-\lambda|^2}{N^2}\right)f_{\lambda}f_{\mu-\lambda}.
\end{align*}
By the assumption on $f$ and the Leibiniz rule for difference operators, we have 
\begin{align}\label{g}
|D_{1}^{\varepsilon_1}\cdots D_{r}^{\varepsilon_r}g(n_1,\cdots,n_r)|\lesssim N^{2A-\varepsilon_1-\cdots-\varepsilon_r}
\end{align} 
for all $(\varepsilon_1,\ldots,\varepsilon_r)\in\{0,1\}^r$, uniformly for $0\leq n_i\lesssim N$, $i=1,\ldots, r$. 
Now let $F^{\mu}=F^\mu(t,H)$ be the sum in \eqref{sum} inside of the absolute value. Then 
\begin{align*}
F^{\mu}=\sum_{0\leq n_r\leq n^\mu_r}e^{in_r\theta_r}\cdots\sum_{0\leq n_1\leq n^\mu_1} e^{in_1\theta_1} g(\lambda) 
\end{align*}
where 
\begin{align*}
\theta_j=\theta_{j}(t,H,\mu):=2[\l w_j,H\r-t\l w_j,\mu\r], \ j=1,\ldots,r.
\end{align*}
We can perform summation by parts on $F^\mu$ with respect to the variable $n_1$
\begin{align*}
\sum_{0\leq n_1\leq n^\mu_1} e^{in_1\theta_1} g(\lambda)
&=\frac{1}{1-e^{i\theta_1}}\sum_{0\leq n_1\leq n_1^\mu}e^{i(n_1+1)\theta_1}D_1g(\lambda)\\
&+\frac{1}{1-e^{i\theta_1}}g(0,n_2,\ldots,n_r)-\frac{e^{i(n_1^\mu+1)\theta_1}}{1-e^{i\theta_1}}g(n_1^\mu+1,n_2,\ldots,n_r).
\end{align*}
Then we can perform summation by parts with respect to other variables $n_2,\ldots,n_r$. But we require that only when 
\begin{align*}
|1-e^{i\theta_j}|\geq  N^{-1}, 
\end{align*}
do we carry out the procedure to the variable $n_j$. Using \eqref{g}, what we we end up with is an estimate 
\begin{align*}
|F^\mu|^2&\lesssim N^{2A}\prod_{j=1}^r\frac{1}{\max\{\frac{1}{N},|1-e^{i\theta_j}|\}}\\
&\lesssim N^{2A}\prod_{j=1}^r\frac{1}{\max\{\frac{1}{N},\|\frac{\theta_j}{2\pi}\|\}}\\
&\lesssim N^{2A}\prod_{j=1}^r\frac{1}{\max\{\frac{1}{N},\|\frac{w_j(H)}{\pi}-\frac{t\l w_j,\mu\r}{\pi}\|\}}.
\end{align*}
Since $D$ is a period of the lattice $L$, 
$
-2\l w_j,\mu\r\in D^{-1}\Z, \ \forall\mu\in L,\ j=1,\ldots,r$.  
Let 
\begin{align*}
m_j=-2\l w_j,\mu\r\cdot D, \ j=1,\ldots,r.
\end{align*} 
Since the map $\Lambda\ni\mu\mapsto (m_1,\ldots,m_r)\in \Z^r$ is one-one, we can write \eqref{sum} into 
\begin{align*}
|F(t,H)|^2&\lesssim N^{2A}\sum_{|m_1|\lesssim N,\ldots, |m_j|\lesssim N}\prod_{j=1}^r\frac{1}{\max\{\frac{1}{N},\|m_j\frac{t}{2\pi D}+\frac{w_j(H)}{\pi}\|\}}\\
&\lesssim N^{2A}\prod_{j=1}^r\left(\sum_{|m_j|\lesssim N}\frac{1}{\max\{\frac{1}{N},\|m_j\frac{t}{2\pi D}+\frac{w_j(H)}{\pi}\|\}}\right). 
\end{align*}
By a standard estimate as in deriving the classical Weyl inequality in one dimension (see Lemma 3.18 of \cite{Bou93}), we get 
\begin{align*}
\sum_{|m_j|\lesssim N}\frac{1}{\max\{\frac{1}{N},\|m_j\frac{t}{2\pi D}+\frac{w_j(H)}{\pi}\|\}}\lesssim\frac{N^2\log N}{[\sqrt{q}(1+N\|\frac{t}{2\pi D}-\frac{a}{q}\|^{1/2})]^2}
\end{align*}
for $\frac{t}{2\pi D}$ lying on the major arc $\mathcal{M}_{a,q}$. Hence
\begin{align*}
|F(t,H)|^2\lesssim \frac{N^{2A+2r}\log^r N}{[\sqrt{q}(1+N\|\frac{t}{2\pi D}-\frac{a}{q}\|^{1/2})]^{2r}}.
\end{align*}
\end{proof}

\begin{rem}\label{WeylSumRemark}
Let $\lambda_0$ be any constant vector in $V$ and $C$ any constant real number. If we slightly generalize the form of the function $F(t,H)$ in Lemma \ref{WeylSumCor} into
\begin{equation*}
F(t,H)=\sum_{\lambda\in L^+}e^{-it|\lambda+\lambda_0|^2+i\langle\lambda, H\rangle}\varphi\left(\frac{|\lambda+\lambda_0|^2+C}{N^2}\right)\cdot f
\end{equation*} 
then the proof still works. 
\end{rem}

We now play out our first application of Lemma \ref{WeylSumCor}. Let $e$ denote the identity element of $U$ and specialize the Schr\"{o}dinger kernel \eqref{KernelComponent} to the identity coset $x=eK$. Noting that $\Phi_\lambda(eK)=1$, we have 
\begin{align}\label{x=K}
K_N(t,eK)=\sum_{\lambda\in\Lambda^+}\varphi\left(\frac{-|\lambda+\rho|^2+|\rho|^2}{N^2}\right)e^{it(-|\lambda+\rho|^2+|\rho|^2)} d_\lambda.
\end{align} 
Recall that $K_N(t,\cdot)$ is periodic in $t$ and let $T$ be a period. 

\begin{prop}\label{Dispersivex=K}
For all globally symmetric spaces $U/K$ of compact type, we have 
\begin{align*}
|K_N(t,eK)|\lesssim_{\varepsilon>0} \frac{N^{d+\varepsilon}}{[\sqrt{q}(1+N\|\frac{t}{T}-\frac{a}{q}\|^{1/2})]^{r}}
\end{align*}
for $\frac{t}{T}\in\mathcal{M}_{a,q}$.
\end{prop}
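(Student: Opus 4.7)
The plan is to recognize the sum \eqref{x=K} as a direct instance of the generalized exponential sum treated in Lemma \ref{WeylSumCor} together with Remark \ref{WeylSumRemark}. Taking $L=\Lambda$, $L^+=\Lambda^+$, $\lambda_0=\rho$, $C=-|\rho|^2$, $H=0$, and
\[
f(\lambda)=d_\lambda,
\]
the right-hand side of \eqref{x=K} is exactly
\[
F(t,0)=\sum_{\lambda\in \Lambda^+}e^{-it(|\lambda+\rho|^2+C)}\,\varphi\!\left(\frac{|\lambda+\rho|^2+C}{N^2}\right) d_\lambda,
\]
after writing $e^{it(-|\lambda+\rho|^2+|\rho|^2)}=e^{-it(|\lambda+\rho|^2-|\rho|^2)}$.

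First, I would verify the decay condition \eqref{DecCon1}. By Lemma \ref{dimensionformula}, $d_\lambda$ is a polynomial in the coordinates $(n_1,\ldots,n_r)$ of $\lambda=n_1w_1+\cdots+n_rw_r\in\Lambda^+$ of total degree $d-r$. Each difference operator $D_i$ applied to a polynomial reduces its degree in $n_i$ by one, so $D_1^{\varepsilon_1}\cdots D_r^{\varepsilon_r}d_\lambda$ is a polynomial of total degree $d-r-(\varepsilon_1+\cdots+\varepsilon_r)$. On the region $\{0\leq n_i\lesssim N\}$ selected by the cutoff $\varphi$, this yields
\[
|D_1^{\varepsilon_1}\cdots D_r^{\varepsilon_r}d_\lambda|\lesssim N^{(d-r)-\varepsilon_1-\cdots-\varepsilon_r},
\]
so hypothesis \eqref{DecCon1} holds with $A=d-r$.

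Second, I would handle the period. Since $\Lambda$ is a rational lattice with respect to the Killing form, it admits a period $D$ in the sense of Definition \ref{definitionofarationallattice}, and one then verifies that $T:=2\pi D$ is a period of $K_N(t,\cdot)$: the exponents $-|\lambda+\rho|^2+|\rho|^2$ lie in $D^{-1}\mathbb{Z}$ because both $2\langle\rho,\lambda\rangle$ and $|\lambda|^2$ do for $\lambda\in\Lambda$. Applying Lemma \ref{WeylSumCor} (as extended by Remark \ref{WeylSumRemark}) with $A=d-r$ and $H=0$ immediately gives
\[
|K_N(t,eK)|=|F(t,0)|\lesssim_{\varepsilon}\frac{N^{(d-r)+r+\varepsilon}}{[\sqrt{q}(1+N\|\tfrac{t}{T}-\tfrac{a}{q}\|^{1/2})]^{r}}=\frac{N^{d+\varepsilon}}{[\sqrt{q}(1+N\|\tfrac{t}{T}-\tfrac{a}{q}\|^{1/2})]^{r}}
\]
for $\tfrac{t}{T}\in\mathcal{M}_{a,q}$, which is the desired estimate.

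There is essentially no obstacle beyond bookkeeping: the only non-routine ingredients, namely the Weyl-differencing/summation-by-parts argument and the classical one-dimensional Weyl estimate, have already been folded into Lemma \ref{WeylSumCor}. The mild care needed is in matching the offset $\rho$ and the constant $|\rho|^2$ to the generalized form of Remark \ref{WeylSumRemark}, and in reading off the correct period $T=2\pi D$ from the rationality of $\Lambda$.
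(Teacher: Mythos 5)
Your proposal is correct and follows essentially the same route as the paper: verify that $d_\lambda$, being a polynomial of degree $d-r$ by Lemma \ref{dimensionformula}, satisfies \eqref{DecCon1} with $A=d-r$, and then apply Lemma \ref{WeylSumCor} (in the shifted form of Remark \ref{WeylSumRemark}, which the paper uses implicitly to absorb the offset $\rho$ and the constant $|\rho|^2$). Your extra bookkeeping on the period $T=2\pi D$ and the degree count under the difference operators is simply a more explicit rendering of the paper's terse argument.
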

\begin{proof}
$d_\lambda$ is a polynomial in $\lambda\in\Lambda$ of degree $d-r$ by Lemma \ref{dimensionformula}. Thus $d_\lambda$ as a function on $\Lambda^+\cong(\mathbb{Z}_{\geq 0})^r$ satisfies \eqref{DecCon1} with $A=d-r$. Then this is a direct consequence of  Lemma \ref{WeylSumCor}. 
\end{proof}

We now strengthen Proposition \ref{Dispersivex=K}.  
\begin{prop}\label{DispersiveNeighborhood}
Let $M$ be a globally symmetric space of the compact type. Let $d(\cdot,\cdot)$ be the Riemannian distance function on $M$ induced from the Killing form. 
Then we have 
\begin{align*}
|K_N(t,x)|\lesssim_{\varepsilon>0} \frac{N^{d+\varepsilon}}{(\sqrt{q}(1+N\|\frac{t}{T}-\frac{a}{q}\|^{1/2}))^{r}}
\end{align*}
for $\frac{t}{T}\in\mathcal{M}_{a,q}$, uniformly for $d(x,eK)\lesssim N^{-1}$. 
\end{prop}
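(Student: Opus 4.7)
The plan is to reduce to Proposition \ref{Dispersivex=K} via a Taylor expansion of the spherical functions at $eK$. By the $K$-invariance of $K_N(t,\cdot)$ and the Cartan decomposition of $U/K$, we may assume $x=[iH]$ with $H\in\overline{\mathfrak{a}^+}$ and $|H|\lesssim N^{-1}$. On the maximal torus, Proposition~9.4 of Ch.~III in \cite{Hel08} gives
\[
\Phi_\lambda([iH])=\sum_\mu c_\mu(\lambda)\,e^{i\mu(H)},\quad c_\mu(\lambda)\geq 0,\ \sum_\mu c_\mu(\lambda)=1,
\]
where $\mu$ ranges over the $\mathfrak{h}$-weights of $V_\lambda$ and $|\mu|\leq|\lambda|\lesssim N$ throughout the Fourier support of the cutoff $\varphi$, so $|\mu(H)|\lesssim 1$.

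The main step is to Taylor-expand each $e^{i\mu(H)}$ around $H=0$ to order $K_0:=C\log N$ (with $C$ a large constant), obtaining
\[
\Phi_\lambda([iH])=\sum_{|\alpha|\leq K_0}\frac{i^{|\alpha|}H^\alpha}{\alpha!}\,M_\alpha(\lambda)+R_{K_0}(\lambda,H),\qquad M_\alpha(\lambda):=\sum_\mu c_\mu(\lambda)\mu^\alpha,
\]
with $|R_{K_0}|\leq (|\lambda||H|)^{K_0+1}/(K_0+1)!\lesssim 1/(K_0+1)!$ throughout the support. Substituting into \eqref{KernelComponent} and setting $a_\lambda(t):=\varphi\bigl(\tfrac{-|\lambda+\rho|^2+|\rho|^2}{N^2}\bigr)e^{it(-|\lambda+\rho|^2+|\rho|^2)}$, each Taylor term becomes a Weyl exponential sum of the shape treated in Remark \ref{WeylSumRemark}: granting the polynomial-in-$\lambda$ structure of $M_\alpha$ discussed below, $d_\lambda M_\alpha(\lambda)$ is polynomial in $\lambda$ of degree $(d-r)+|\alpha|$ by Lemma \ref{dimensionformula}, and the lemma yields
\[
\Bigl|\sum_\lambda a_\lambda(t)\,d_\lambda M_\alpha(\lambda)\Bigr|\lesssim_\varepsilon \frac{N^{d+|\alpha|+\varepsilon}}{(\sqrt{q}(1+N\|\tfrac{t}{T}-\tfrac{a}{q}\|^{1/2}))^{r}}.
\]
Multiplying by $|H^\alpha|/\alpha!\lesssim N^{-|\alpha|}/\alpha!$ and summing the resulting series over multi-indices (using $\sum_\alpha 1/\alpha!<\infty$) gives total contribution $\lesssim N^{d+\varepsilon}/(\sqrt{q}(\ldots))^r$, while the Taylor remainder contributes at most $N^d/(K_0+1)!$, super-polynomially small for $K_0=C\log N$.

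The principal obstacle is establishing the polynomial dependence of $M_\alpha(\lambda)$ on $\lambda$. My plan for this is the Harish-Chandra isomorphism: writing $M_\alpha(\lambda)=i^{-|\alpha|}\langle\delta_\lambda(H^\alpha)\mathbf{e},\mathbf{e}\rangle$ as a matrix coefficient in $V_\lambda$, and using the $K$-fixedness of $\mathbf{e}$ to replace $H^\alpha\in U(\mathfrak{u}^{\mathbb{C}})$ by its double-$K$-average (a bi-$K$-invariant element of order at most $|\alpha|$), one identifies the induced scalar action on $V_\lambda^K\cong\mathbb{C}\mathbf{e}$ with evaluation at $\lambda+\rho$ of the corresponding element of $S(\mathfrak{a}^{\mathbb{C}})^W$---a polynomial in $\lambda$ of degree at most $|\alpha|$. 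This is the substantive content of the ``integral formula near the identity element'' mentioned in the introduction; for the rank-one building blocks of $M$ the polynomial dependence is also directly accessible from the explicit Jacobi polynomial form of $\Phi_\lambda$ recorded in the preliminaries.
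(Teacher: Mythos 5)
Your route is genuinely different from the paper's. The paper does not Taylor-expand at all: it applies Lemma \ref{WeylSumCor} (via Remark \ref{WeylSumRemark}) exactly once, to $f(\lambda)=d_\lambda\Phi_\lambda(uK)$, and verifies the hypothesis \eqref{DecCon1} directly from Harish-Chandra's integral representation \eqref{PhiIntegral}: since $|\mathbf{A}(ku^{-1}k^{-1})|\lesssim N^{-1}$ when $d(uK,eK)\lesssim N^{-1}$, each discrete difference in $\lambda$ brings down a factor $O(N^{-1})$ (Lemma \ref{differencePhi}), and the Leibniz rule with the degree-$(d-r)$ polynomial $d_\lambda$ gives \eqref{DecCon1} with $A=d-r$. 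This works at every $x$ with $d(x,eK)\lesssim N^{-1}$, with no reduction to the maximal torus. Your plan --- reduce to $x=[iH]$, $|H|\lesssim N^{-1}$, expand $e^{i\mu(H)}$ to order $C\log N$, and treat each moment $M_\alpha(\lambda)=\sum_\mu c_\mu(\lambda)\mu^\alpha$ by the Weyl sum lemma --- is a legitimate alternative in outline, and your Harish-Chandra homomorphism argument for the polynomiality of $M_\alpha$ in $\lambda$ of degree at most $|\alpha|$ is essentially correct (though note the paper's ``integral formula near the identity'' is \eqref{PhiIntegral} used for difference bounds, not for moments).

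The gap is in how you apply Lemma \ref{WeylSumCor} to $f(\lambda)=d_\lambda M_\alpha(\lambda)$ and then sum over $\alpha$. The hypothesis of that lemma is not a degree statement but the finite-difference bound \eqref{DecCon1}, and its implied constant propagates into the constant of \eqref{appendix2}; knowing only that $M_\alpha$ is a polynomial of degree $\le|\alpha|$ gives no control on $|D_1^{\varepsilon_1}\cdots D_r^{\varepsilon_r}M_\alpha|$, since the coefficients are a priori unbounded. Because you sum over all $|\alpha|\le C\log N$, you must show that the constants in \eqref{DecCon1} for $d_\lambda M_\alpha$ grow at most geometrically in $|\alpha|$; otherwise the step ``multiply by $|H^\alpha|/\alpha!$ and use $\sum_\alpha 1/\alpha!<\infty$'' does not close, as each term carries its own unquantified implied constant. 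What rescues the argument is the normalization you already recorded --- $c_\mu(\lambda)\ge 0$, $\sum_\mu c_\mu(\lambda)=1$, $|\mu|\le|\lambda|$ --- which gives $|M_\alpha(\lambda)|\le|\lambda|^{|\alpha|}$ at all dominant lattice points; but converting this sup bound into $|D_1^{\varepsilon_1}\cdots D_r^{\varepsilon_r}M_\alpha|\le C^{|\alpha|}N^{|\alpha|-\varepsilon_1-\cdots-\varepsilon_r}$ for $0\le n_i\lesssim N$ needs an additional argument (e.g. lattice interpolation plus a Markov-type inequality for polynomials of degree $|\alpha|\ll N$, with constants tracked as $C^{|\alpha|}$), and one must also observe that the constant in the conclusion of Lemma \ref{WeylSumCor} depends only polynomially on the constant in \eqref{DecCon1}. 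None of this is fatal --- with the constants tracked, your remainder estimate $\lesssim N^d/(C\log N)!$ is indeed negligible against the worst-case size $\sim N^{d-r}$ of the right-hand side --- but as written the term-by-term bound and the summation over $\alpha$ are asserted rather than proved, and this is precisely where the paper's single application of the lemma to $d_\lambda\Phi_\lambda(uK)$ is markedly simpler.
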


The proof of this proposition hinges on an integral representation of spherical functions in a neighborhood of $o$. We continue the notations in the Preliminaries. Let $\mathfrak{n}^{\mathbb{C}}, \mathfrak{a}^{\mathbb{C}}, \mathfrak{k}^{\mathbb{C}}$ be respectively the complexification of $\mathfrak{n}, \mathfrak{a}, \mathfrak{k}$ in $\mathfrak{g}^\C$. Let $G^\C$ be a simply connected group whose Lie algebra is $\mathfrak{g}^\C$ so that $U$ becomes an analytic subgroup of $G^\C$.  
By Section 9.2 Ch. III in \cite{Hel08}, the mapping 
\begin{align*}
(X,H,T)\mapsto \exp X\exp H\exp T, \ X\in\mathfrak{n}^{\mathbb{C}}, H\in\mathfrak{a}^{\mathbb{C}}, T\in\mathfrak{k}^{\mathbb{C}}
\end{align*}
is a holomorphic diffeomorphism of a neighborhood $\mathcal{U}^{\mathbb{C}}$ of $G^{\mathbb{C}}$ such that $\mathcal{U}=\mathcal{U}^{\mathbb{C}}\cap U$ is invariant under the maps $u\mapsto kuk^{-1}$, $k\in K$. This induces the map 
\begin{align*}
{\bf A}: \exp X\exp H\exp T\to H
\end{align*}
that sends $\mathcal{U}^{\mathbb{C}}$ into $\mathfrak{a}^{\mathbb{C}}$. Let $\Phi_\lambda$ be the spherical function associated to $\lambda\in \Lambda^+$. By Lemma 9.2 of Ch. III in \cite{Hel08}, 
\begin{align}\label{PhiIntegral}
\Phi_\lambda(uK)=\int_K e^{-\lambda({\bf A}(ku^{-1}k^{-1}))}\ dk, \ u\in \mathcal{U}.
\end{align}
Note that the map $u\mapsto kuk^{-1}$ preserves the distance $d_U(\cdot,e)$ to the identity $e$ of $U$. Let $N\geq1$ be large enough so that $\{u\in U: d_U(u,e)\lesssim N^{-1}\}\subset \mathcal{U}$. Then 
\begin{align}\label{|Akuk-1|}
|{\bf A}(ku^{-1}k^{-1})|\lesssim N^{-1}
\end{align}
uniformly for $d_U(u,e)\lesssim N^{-1}$ and $k\in K$. Here the norm on $\mathfrak{a}^{\mathbb{C}}$ of course comes from the Killing form. Write $\lambda=n_1w_1+\cdots+n_rw_r$, $n_i\in\mathbb{Z}_{\geq 0}$, viewing $\Phi_\lambda(uK)=\Phi(\lambda,uK)$ as a function of $\lambda\in(\mathbb{Z}_{\geq 0})^r$, \eqref{PhiIntegral} and \eqref{|Akuk-1|} imply that $\Phi(\lambda,uK)$ satisfies an inequality of the type \eqref{DecCon1} as follows. 

\begin{lem}\label{differencePhi}
\begin{align*}
|D_{i_1}\cdots D_{i_n}\Phi(n_1,\cdots,n_r, uK)|\lesssim N^{-n}
\end{align*} 
holds uniformly for $0\leq n_i\lesssim N$ and $d_U(u,e)\lesssim N^{-1}$, for all $i_j=1,\cdots, r$ and $n\in\mathbb{Z}_{\geq0}$. 
\end{lem}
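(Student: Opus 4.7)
The plan is to differentiate under the integral sign in formula \eqref{PhiIntegral}. Set $H(k,u) := \mathbf{A}(ku^{-1}k^{-1}) \in \mathfrak{a}^{\mathbb{C}}$, so that
$$\Phi_\lambda(uK) = \int_K e^{-\lambda(H(k,u))}\, dk.$$
With $\lambda = n_1 w_1 + \cdots + n_r w_r$, the integrand is a pure exponential in each $n_i$, so the difference operators commute with the integral and act multiplicatively:
$$D_i\, e^{-\lambda(H)} = e^{-\lambda(H)}\bigl(e^{-w_i(H)}-1\bigr).$$
Iterating (and noting the $D_i$'s commute on exponentials),
$$D_{i_1}\cdots D_{i_n}\, e^{-\lambda(H)} = e^{-\lambda(H)} \prod_{j=1}^{n}\bigl(e^{-w_{i_j}(H)}-1\bigr).$$

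Next I would turn the estimate \eqref{|Akuk-1|} into pointwise bounds on each factor. Since $|H(k,u)| \lesssim N^{-1}$ uniformly in $k \in K$ and $d_U(u,e) \lesssim N^{-1}$, the linear functionals $w_{i_j}$ satisfy $|w_{i_j}(H(k,u))| \lesssim N^{-1}$, hence $|e^{-w_{i_j}(H(k,u))}-1| \lesssim N^{-1}$. Meanwhile, for $|\lambda| \lesssim N$ (the regime where $n_i \lesssim N$), we have $|\lambda(H(k,u))| \leq |\lambda|\,|H(k,u)| \lesssim 1$, so $|e^{-\lambda(H(k,u))}| \lesssim 1$. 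Multiplying these together yields
$$\bigl|D_{i_1}\cdots D_{i_n}\, e^{-\lambda(H(k,u))}\bigr| \lesssim N^{-n}$$
pointwise in $k$, and integrating over $K$ (a set of finite measure) gives the desired bound.

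The only delicate point is that $H$ lies in $\mathfrak{a}^{\mathbb{C}}$ rather than $\mathfrak{a}$, so $\lambda(H)$ is generally complex; one must check $|e^{-\lambda(H)}|$ does not blow up. But this is precisely controlled by $|\operatorname{Re}\lambda(H)| \leq |\lambda||H| \lesssim 1$. The other implicit ingredient, namely that $u \mapsto \mathbf{A}(ku^{-1}k^{-1})$ is Lipschitz near $e$ with $\mathbf{A}(e) = 0$, has already been recorded as \eqref{|Akuk-1|}. Thus there is no real obstacle—the lemma is essentially a direct consequence of the integral representation \eqref{PhiIntegral} combined with the fact that the weights $w_j$ are linear functionals, which makes difference operators in $n_j$ behave as near-derivatives of size $O(N^{-1})$ when evaluated at $H$ of size $O(N^{-1})$.
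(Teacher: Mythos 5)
Your proof is correct and is exactly the argument the paper intends: the lemma is stated there as an immediate consequence of \eqref{PhiIntegral} and \eqref{|Akuk-1|}, and your computation---differences acting multiplicatively on the exponential integrand, each factor $e^{-w_{i_j}(H)}-1$ of size $O(N^{-1})$, and $|e^{-\lambda(H)}|\lesssim 1$ since $|\lambda||H|\lesssim 1$---is precisely the omitted verification. No gaps; the handling of the complex-valued $\lambda(H)$ and of repeated indices $i_j$ is also fine.
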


\begin{proof}[Proof of Proposition \ref{DispersiveNeighborhood}]
For each $u\in U$, there exists $k\in K$ such that 
$d(uK,eK)=d_U(uk,e)$ by definition of the Riemannian metrics. 
Suppose $u\in U$ such that $d(uK,eK)\lesssim N^{-1}$. Then there exists $k_u\in K$ such that $d_U(uk_u,e)=d(uK,eK)\lesssim N^{-1}$. 
Write $\lambda=n_1w_1+\cdots+n_rw_r$, and 
\begin{align*}
\Phi(n_1,\ldots,n_r,uK)=\Phi(\lambda,uK)=\Phi_\lambda(uK)=\int_K e^{-\lambda({\bf A}(k(uk_u)^{-1}k^{-1}))}\ dk, \ u\in \mathcal{U}.
\end{align*}
By Lemma \ref{differencePhi}, we have 
\begin{align*}
|D_{i_1}\cdots D_{i_n}\Phi(n_1,\ldots,n_r, uK)|\lesssim N^{-n}
\end{align*} 
holds uniformly for $0\leq n_i\lesssim N$ and  $d(uK,eK)\lesssim N^{-1}$, for all $i_j=1,\cdots, r$ and $n\in\mathbb{Z}_{\geq0}$. Using the fact that $d_\lambda$ is a polynomial in $\lambda$ of degree $d-r$ and applying the Leibniz rule \eqref{Leibniz}, we have that $f(\lambda):=d_\lambda\Phi_\lambda(uK)$ as a function of $\lambda$ satisfies \eqref{DecCon1} with $A=d-r$. Then we apply Lemma \ref{WeylSumCor} to finish the proof.  
\end{proof}

Finally, we upgrade Proposition \ref{DispersiveNeighborhood} to cover the cases when $[iH]\in A$ is within the distance of about $N^{-1}$ from any corner.

\begin{prop}\label{DispersiveCorner}
Let $[iH_0]\in A$ be any corner. Then it holds that 
\begin{align}\label{DispersiveNeighborhood(i)}
|K_N(t,[iH])|\lesssim_{\varepsilon>0} \frac{N^{d+\varepsilon}}{(\sqrt{q}(1+N\|\frac{t}{T}-\frac{a}{q}\|^{1/2}))^{r}}
\end{align}
\eqref{DispersiveNeighborhood(i)} holds uniformly for 
$\frac{t}{T}\in \mathcal{M}_{a,q}$ and $[iH]\in A$ such that $d([iH], [iH_0])\lesssim N^{-1}$.  
\end{prop}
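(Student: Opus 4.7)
The plan is to reduce the corner estimate to the proof technique of Proposition \ref{DispersiveNeighborhood} by writing a point near the corner as a torus translation of a point near the identity and exploiting the arithmetic of the restricted weights at a corner. Since the proposition only asks for the bound at $[iH]\in A$, I will work directly on the torus rather than via a global isometry.

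First I would write $[iH]=[iH_0+iH']$ with $|H'|\lesssim N^{-1}$ and apply the torus expansion $\Phi_\lambda|_A=\sum_j c_j e^{\lambda_j}$ (Proposition 9.4 of Ch.~III in \cite{Hel08}). The key arithmetic claim is that all weights $\lambda_j$ appearing in this sum lie in $\lambda+\Gamma$, where $\Gamma=2\mathbb{Z}\Sigma$. For products of odd-dimensional spheres this is directly verifiable from the expansion \eqref{sphericalsphere} (whose restricted weights on each factor form an arithmetic progression of common difference $2\alpha$), and it passes to tensor products. Combined with the corner condition $\alpha(H_0)\in\pi\mathbb{Z}$ for every $\alpha\in\Sigma$, this gives $(\lambda_j-\lambda)(H_0)\in 2\pi\mathbb{Z}$ for every $j$, so
$$\Phi_\lambda([iH])=\sum_j c_j e^{i\lambda_j(H_0)}e^{i\lambda_j(H')}=e^{i\lambda(H_0)}\Phi_\lambda([iH']).$$

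Substituting into \eqref{KernelComponent} yields
$$K_N(t,[iH])=\sum_{\lambda\in\Lambda^+}\varphi\!\left(\tfrac{-|\lambda+\rho|^2+|\rho|^2}{N^2}\right)e^{it(-|\lambda+\rho|^2+|\rho|^2)}\,d_\lambda\,e^{i\langle\lambda,H_0\rangle}\,\Phi_\lambda([iH']),$$
identifying $\lambda(H_0)=\langle\lambda,H_0\rangle$ via the Killing form. This is exactly the form of $F(t,H)$ in Lemma \ref{WeylSumCor} (combined with Remark \ref{WeylSumRemark} to accommodate the $\rho$-shift) with $H=H_0$ and $f(\lambda)=d_\lambda\Phi_\lambda([iH'])$. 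By Lemmas \ref{dimensionformula} and \ref{differencePhi} together with the Leibniz rule \eqref{Leibniz}, this $f$ satisfies \eqref{DecCon1} with $A=d-r$ uniformly for $|H'|\lesssim N^{-1}$, exactly as in the proof of Proposition \ref{DispersiveNeighborhood}. Since the bound of Lemma \ref{WeylSumCor} is uniform over $H$, applying it with $H=H_0$ delivers the required estimate \eqref{DispersiveNeighborhood(i)}.

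I expect the main obstacle to be the arithmetic claim $\lambda_j\in\lambda+\Gamma$: this is what collapses the multiple phases $e^{i\lambda_j(H_0)}$ to the single linear-in-$\lambda$ phase $e^{i\lambda(H_0)}$, which can then be absorbed into the $H$-variable of Lemma \ref{WeylSumCor}. Without this collapse, the scalar $\Phi_\lambda([iH_0])$ taken alone would contribute $O(1)$ to each difference $D_j$ in $\lambda$, and the Weyl-differencing argument would lose a full factor of $N$ per derivative — precisely what the $H$-uniform bound of Lemma \ref{WeylSumCor} sidesteps.
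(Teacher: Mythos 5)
Your proposal is correct in substance and shares the paper's two pillars --- the corner identity $\Phi_\lambda([iH_0+iH'])=e^{i\lambda(H_0)}\Phi_\lambda([iH'])$ and an application of Lemma \ref{WeylSumCor} (with Remark \ref{WeylSumRemark}) to $f(\lambda)=d_\lambda\Phi_\lambda([iH'])$ satisfying \eqref{DecCon1} with $A=d-r$ via Lemmas \ref{dimensionformula} and \ref{differencePhi} --- but it handles both steps differently. For the identity, the paper proves Corollary \ref{cornerto0} for an arbitrary compact symmetric space through the Heckman--Opdam expansion (Theorem \ref{SphericalHO}) together with the root-lattice fact $s\mu-\mu\in 2\mathbb{Z}\alpha_1+\cdots+2\mathbb{Z}\alpha_r$, whereas you verify the weight congruence $\lambda_j\in\lambda+\Gamma$ directly for spheres; this is fine for the products of odd spheres needed in Theorem \ref{keyproposition} (though the congruence is more honestly read off from the classical expansion of the Gegenbauer polynomial $C_n^{(\lambda)}(\cos\theta)$ in $\cos((n-2k)\theta)$, or from its parity in $\cos\theta$, than from \eqref{sphericalsphere}, whose terms carry the non-polynomial factors $(2\sin\theta)^{-(\nu+\lambda)}$), but it does not by itself cover the general symmetric spaces for which the proposition is stated and fed into Proposition \ref{Lppart1}; for that generality one needs essentially the paper's Corollary \ref{cornerto0}. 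For the second step, the paper does not absorb the phase into the lemma: it splits $\Lambda^+$ into finitely many translates of $\Gamma_1^+=\mathbb{Z}_{\geq 0}Bw_1+\cdots+\mathbb{Z}_{\geq 0}Bw_r$ with $Bw_i\in\Gamma$, so that on each coset $\Gamma_1^++\mu$ the factor $e^{i(\lambda+\mu)(H_0)}=e^{i\mu(H_0)}$ is a constant pulled out of the sum, and then repeats the argument of Proposition \ref{DispersiveNeighborhood}. Your alternative --- identifying $e^{i\lambda(H_0)}$ with the linear phase $e^{i\langle\lambda,H\rangle}$ of Lemma \ref{WeylSumCor} at $H$ equal to the Killing dual of $H_0$ and invoking the uniformity of \eqref{appendix2} in $H$ --- is legitimate and somewhat shorter, since it removes the coset decomposition entirely; what the paper's decomposition buys is that no linear phase has to be tracked at all, at the cost of a finite splitting. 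Your closing observation about why the phase collapse is indispensable (without it $\Phi_\lambda$ near the corner oscillates at unit scale in $\lambda$, so \eqref{DecCon1} fails and each difference loses a factor of $N$) is exactly the right point.
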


To prove this proposition, we describe another important characterization of spherical functions. For $\mu,\lambda\in \Lambda$, let $\mu\leq \lambda$ denote the statement that $\lambda-\mu\in 2\mathbb{Z}_{\geq 0}\alpha_1+\cdots+2\mathbb{Z}_{\geq 0}\alpha_r$. 
For $\mu\in\Lambda^+$, define
\begin{align*}
M(\mu)=\sum_{s\in W} e^{s\mu}. 
\end{align*}
Then define the Heckman-Opdam polynomials (the generalized Jacobi polynomials) $P(\lambda)$, $\lambda\in \Lambda^+$, by 
\begin{align*}
P(\lambda)=\sum_{\mu\in \Lambda^+,\mu\leq\lambda}c_{\lambda ,\mu}M(\mu), \ c_{\lambda, \lambda}=1
\end{align*} 
such that 
\begin{align*}
\int_A P(\lambda)\cdot\overline{M(\mu)}\cdot\left|\prod_{\alpha\in\Sigma^+}(e^{\alpha}-e^{-\alpha})^{m_\alpha}\right|\ da=0, \ \t{for any } \mu\in\Lambda^+\t{ such that } \mu<\lambda. 
\end{align*}
Here $da$ is the normalized Haar measure on the $A$. Normalize $P(\lambda)$ by 
\begin{align*}
\tilde{P}(\lambda)=\frac{P(\lambda)}{P(\lambda)(o)}.
\end{align*}

\begin{thm}[Corollary 5.2.3 in Part I of \cite{HS94}]\label{SphericalHO}
The spherical functions on $U/K$ restricted on $A$ are given by the normalized Heckman-Opdam polynomials: 
\begin{align*}
\Phi_\lambda=\tilde{P}(\lambda), \ \lambda\in \Lambda^+. 
\end{align*}
\end{thm}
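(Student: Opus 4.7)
The plan is to verify that the restriction $\Phi_\lambda|_A$ satisfies the three defining properties of $\tilde{P}(\lambda)$: (i) a triangular expansion $\sum_{\mu\leq\lambda}c_{\lambda,\mu}M(\mu)$ with $c_{\lambda,\lambda}\neq 0$; (ii) orthogonality to $M(\mu)$ for every $\mu<\lambda$ with respect to the weight $\left|\prod_{\alpha\in\Sigma^+}(e^{\alpha}-e^{-\alpha})^{m_\alpha}\right|$ on $A$; and (iii) the normalization $\Phi_\lambda(o)=1$ at the identity coset. Uniqueness of the upper-triangular Gram--Schmidt basis relative to $\{M(\mu):\mu\in\Lambda^+\}$ will then force $\Phi_\lambda|_A=\tilde{P}(\lambda)$.

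For (i), I would realize $\Phi_\lambda$ as the matrix coefficient $\langle\delta_\lambda(u){\bf e},{\bf e}\rangle$ of the spherical representation $(\delta_\lambda,V_{\delta_\lambda})$ of highest weight $\lambda$ against the (up to scalar) unique $K$-fixed unit vector ${\bf e}$, and decompose $V_{\delta_\lambda}=\bigoplus_\mu V_{\delta_\lambda}^\mu$ into joint $\mathfrak{a}$-weight spaces. Writing ${\bf e}=\sum_\mu e_\mu$ with $e_\mu\in V_{\delta_\lambda}^\mu$ and using that $\delta_\lambda(\exp iH)$ acts on $V_{\delta_\lambda}^\mu$ by $e^{i\mu(H)}$, I obtain $\Phi_\lambda(\exp iH)=\sum_\mu\|e_\mu\|^2\,e^{i\mu(H)}$. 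Every $\mathfrak{a}$-weight $\mu$ satisfies $\lambda-\mu\in\mathbb{Z}_{\geq 0}\Sigma^+$; since all weights of $\delta_\lambda$ lie in a single coset of the lattice $\Gamma$ generated by $2\cdot\Sigma$, this refines to $\lambda-\mu\in 2\mathbb{Z}_{\geq 0}\alpha_1+\cdots+2\mathbb{Z}_{\geq 0}\alpha_r$. The $K$-bi-invariance of $\Phi_\lambda$ makes $\Phi_\lambda|_A$ invariant under $W=N_K(A)/Z_K(A)$, so grouping weights by $W$-orbit gives $\Phi_\lambda|_A=\sum_{\mu\in\Lambda^+,\,\mu\leq\lambda}c_{\lambda,\mu}M(\mu)$ with $c_{\lambda,\mu}\geq 0$; positivity of $c_{\lambda,\lambda}=\|e_\lambda\|^2$ is supplied by the Cartan--Helgason theorem, which guarantees that the $K$-fixed vector has a nonzero component in the $\mathfrak{a}$-highest weight space.

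For (ii), Schur's orthogonality relations give $\int_{U/K}\Phi_\lambda\overline{\Phi_{\lambda'}}\,d(uK)=d_\lambda^{-1}\delta_{\lambda,\lambda'}$. Pushing this to $A$ via the integral formula \eqref{integralformula} and computing $e^\alpha([iH])=e^{i\alpha(H)}$ yields $\left|\prod_{\alpha\in\Sigma^+}(e^{\alpha}-e^{-\alpha})^{m_\alpha}\right|=\prod_{\alpha\in\Sigma^+}|2i\sin\alpha(H)|^{m_\alpha}=2^{d-r}D([iH])$ by \eqref{numberofroots}, so Schur orthogonality transfers, up to a fixed constant, to orthogonality on $A$ with the Heckman--Opdam weight. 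The triangular expansion from (i) inverts to express each $M(\mu)$ as $\sum_{\nu\leq\mu}d_{\mu,\nu}\Phi_\nu|_A$; for $\mu<\lambda$ every such $\nu$ satisfies $\nu\neq\lambda$, and thus $\Phi_\lambda|_A$ is orthogonal to every such $M(\mu)$. Property (iii) is immediate: $o=eK$ corresponds to $H=0$, at which every $e^\mu$ evaluates to $1$, giving $\Phi_\lambda(o)=\langle{\bf e},{\bf e}\rangle=1$.

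By the uniqueness of the upper-triangular basis $\{P(\lambda)\}$ whose $M(\lambda)$-coefficient is $1$ and which is orthogonal to lower-order Weyl orbit sums, (i) and (ii) force $\Phi_\lambda|_A=c_{\lambda,\lambda}\,P(\lambda)$; evaluating at $o$ via (iii) fixes $c_{\lambda,\lambda}=1/P(\lambda)(o)$, so $\Phi_\lambda|_A=\tilde{P}(\lambda)$. The main obstacle is justifying $c_{\lambda,\lambda}>0$ in step (i): this rests on the Cartan--Helgason characterization of spherical highest weights and is precisely where the spherical hypothesis $\lambda\in\Lambda^+\cong\widehat{U}_K$ (rather than mere dominance for $U$) plays an essential role.
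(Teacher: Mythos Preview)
The paper does not prove this theorem at all: it is quoted verbatim as Corollary 5.2.3 from Heckman--Schlichtkrull \cite{HS94} and used as a black box in the proof of Corollary \ref{cornerto0}. So there is no ``paper's own proof'' to compare against; you have supplied an argument where the paper simply cites the literature.

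Your outline is the standard one and is essentially correct. Two small points deserve more care. First, the passage from $\lambda-\mu\in\mathbb{Z}_{\geq 0}\Sigma^+$ to $\lambda-\mu\in 2\mathbb{Z}_{\geq 0}\alpha_1+\cdots+2\mathbb{Z}_{\geq 0}\alpha_r$ (the specific partial order the paper uses, with the $\alpha_i$ simple in $\Sigma_*$) is not fully justified by saying the weights lie in a single $\Gamma$-coset: being in the $\mathbb{Z}$-span of $2\alpha_i$ together with being a nonnegative integral combination of positive restricted roots does not immediately yield nonnegative integral coefficients in the $2\alpha_i$, especially when $\Sigma$ is non-reduced. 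The honest argument here goes through Vretare's analysis of the radial part of the Laplacian (or the recursion for the coefficients $c_{\lambda,\mu}$ in the hypergeometric expansion), which is precisely what Heckman--Opdam do. Second, your appeal to Cartan--Helgason for $c_{\lambda,\lambda}>0$ is the right idea but is really the statement that a highest-weight vector in a spherical representation is $M$-fixed (where $M=Z_K(\mathfrak{a})$), so that its inner product with the $K$-fixed vector is nonzero; this is indeed part of the Cartan--Helgason theorem, but the logical direction you need is the less commonly stated one.
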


\begin{cor}\label{cornerto0}
Let $[iH_0]\in A$ be a corner. Then 
\begin{align*}
\Phi_\lambda([iH+iH_0])=e^{i\lambda(H_0)}\Phi_\lambda([iH]), \ H\in \mathfrak{a}, \ \lambda\in\Lambda^+.
\end{align*}
\end{cor}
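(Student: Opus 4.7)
The plan is to apply Theorem \ref{SphericalHO} and reduce the claim to a symmetry of the orbit sums $M(\mu)$ that enter the definition of the Heckman-Opdam polynomial. Since $\Phi_\lambda|_A = \tilde{P}(\lambda) = P(\lambda)/P(\lambda)(o)$ and $P(\lambda)(o)$ is a nonzero scalar independent of the point, it suffices to show
\[
P(\lambda)([iH+iH_0]) = e^{i\lambda(H_0)}\, P(\lambda)([iH]).
\]
Expanding $P(\lambda) = \sum_{\mu \in \Lambda^+,\ \mu \leq \lambda} c_{\lambda,\mu} M(\mu)$, by linearity it is enough to verify the transformation rule $M(\mu)([iH+iH_0]) = e^{i\lambda(H_0)} M(\mu)([iH])$ for each $\mu \in \Lambda^+$ with $\mu \leq \lambda$.

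Directly from $e^\nu([iH]) = e^{i\nu(H)}$, one has $M(\mu)([iH+iH_0]) = \sum_{s \in W} e^{is\mu(H)}\, e^{is\mu(H_0)}$, so the required identity is equivalent to
\[
e^{is\mu(H_0)} = e^{i\lambda(H_0)} \quad \text{for every } s \in W \text{ and every } \mu \in \Lambda^+ \text{ with } \mu \leq \lambda.
\]
My plan is to establish this by proving that both $s\mu - \mu$ and $\lambda - \mu$ lie in the lattice $\Gamma$ generated by $2 \cdot \Sigma$, and then invoking the corner hypothesis. For the first inclusion, a standard reflection computation gives $s_\alpha \mu - \mu = -\frac{2\langle \mu, \alpha\rangle}{\langle \alpha, \alpha\rangle}\alpha$ for $\alpha \in \Sigma$, which lies in $\Gamma$ since $\mu \in \Lambda$ forces $\frac{\langle \mu, \alpha\rangle}{\langle \alpha, \alpha\rangle} \in \mathbb{Z}$ whenever $\alpha \in \Sigma_*$, and the remaining case $\alpha = 2\beta \in \Sigma \setminus \Sigma_*$ reduces to $s_\beta \mu - \mu \in 2\mathbb{Z}\beta \subset \Gamma$; composition of reflections preserves this property. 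The second inclusion is immediate from the definition $\mu \leq \lambda \Leftrightarrow \lambda - \mu \in 2\mathbb{Z}_{\geq 0}\alpha_1 + \cdots + 2\mathbb{Z}_{\geq 0}\alpha_r \subset \Gamma$.

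Because $\Gamma$ is spanned by $\{2\alpha : \alpha \in \Sigma\}$ and the corner hypothesis forces $\alpha(H_0) \in \pi\mathbb{Z}$ for every $\alpha \in \Sigma$, we obtain $\nu(H_0) \in 2\pi\mathbb{Z}$ for every $\nu \in \Gamma$. Applying this to $\nu = s\mu - \mu$ and $\nu = \lambda - \mu$ yields $e^{is\mu(H_0)} = e^{i\mu(H_0)} = e^{i\lambda(H_0)}$, which gives the orbit-sum identity and hence the corollary. I do not anticipate a significant obstacle here; the only point requiring care is keeping straight the distinction between $\Sigma$ and $\Sigma_*$ when verifying $s_\alpha\mu - \mu \in \Gamma$ for a multipliable root $\alpha$, but the definition of $\Gamma$ as generated by $2\Sigma$ rather than by $2\Sigma_*$ is specifically engineered to absorb that case.
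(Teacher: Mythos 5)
Your argument is correct and is essentially the paper's own proof: both reduce via Theorem \ref{SphericalHO} and the expansion of $P(\lambda)$ into orbit sums $M(\mu)$ to the identity $e^{i(s\mu)(H_0)}=e^{i\lambda(H_0)}$ for all $\mu\le\lambda$ and $s\in W$, which follows because $\lambda-\mu$ and $s\mu-\mu$ lie in a doubled root lattice whose elements pair with $H_0$ into $2\pi\mathbb{Z}$ (the paper simply cites Varadarajan, Corollary 4.13.3, for $s\mu-\mu\in 2\mathbb{Z}\alpha_1+\cdots+2\mathbb{Z}\alpha_r$, where you verify it by the reflection computation). The only quibble is notational: in the multipliable case one has $\alpha\in\Sigma\setminus\Sigma_*$ with $2\alpha\in\Sigma_*$ (not $\alpha=2\beta\in\Sigma\setminus\Sigma_*$), but since $s_\alpha=s_{2\alpha}$ your reduction to the inmultiplicable case still goes through.
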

\begin{proof}
By the above theorem and the definition of Heckman-Opdam polynomials, it suffices to show that for any $\lambda,\mu\in\Lambda^+$ such that $\mu\leq\lambda$ and $s\in W$, 
\begin{align*}
e^{i(s\mu)(H_0)}=e^{i\lambda(H_0)}. 
\end{align*}
This is reduced to showing $(s\mu-\lambda)(H_0)\in 2\pi\mathbb{Z}$, and by the definition of $[iH_0]$ as a corner, it is further reduced to $s\mu-\lambda\in 2\mathbb{Z}\alpha_1+\cdots+2\mathbb{Z}\alpha_r$. By the fact $\mu\leq\lambda$, it then suffices to show 
$s\mu-\mu\in 2\mathbb{Z}\alpha_1+\cdots+2\mathbb{Z}\alpha_r$ for any $\mu\in \Lambda$ and $s\in W$. But this is a standard fact of root system theory (see Corollary 4.13.3 in \cite{Var84}).  
\end{proof}

Let $\Gamma=2\mathbb{Z}\alpha_1+\cdots+2\mathbb{Z}\alpha_r$. 
The above corollary implies that for $\lambda\in \Gamma$ and $\mu\in\Lambda^+$ such that $\lambda+\mu\in \Lambda^+$, 
\begin{align}\label{Philambda+mu}
\Phi_{\lambda+\mu}([iH+iH_0])=e^{i\mu(H_0)}\Phi_{\lambda+\mu}([iH]).
\end{align} 
This inspires a decomposition of $\Lambda^+$ and thus of the Schr\"odinger kernel \eqref{KernelComponent}, so to make applicable the techniques in proving Theorem \ref{DispersiveNeighborhood} for the proof of Theorem \ref{DispersiveCorner}. 

\begin{proof}[Proof of Proposition \ref{DispersiveCorner}]
First note that all the fundamental weights $w_1,\cdots, w_r$ are rational linear combinations of the roots. Thus there exists some $B\in\mathbb{N}$ such that $Bw_i\in \Gamma$ for all $i$. Define  
\begin{align*}
\Gamma^+_1=\mathbb{Z}_{\geq 0}Bw_1+\cdots+\mathbb{Z}_{\geq 0}Bw_r. 
\end{align*}
Then $\Lambda^+/\Gamma^+_1=\{n_1w_1+\cdots+n_rw_r: n_i=0,\cdots, B-1, i=1,\cdots, r\}$ and we decompose 
\begin{align*}
\Lambda^+=\bigsqcup_{\mu\in \Lambda^+/\Gamma^+_1}(\Gamma^+_1+\mu). 
\end{align*}
This yields a decomposition of the Schr\"odinger kernel
\begin{align*}
K_N&=\sum_{\mu\in \Lambda^+/\Gamma^+_1}K_{N}^\mu,\numberthis \label{WeighttoRoot}\\ 
K_{N}^\mu&=\sum_{\lambda\in\Gamma^+_1}\varphi\left(\frac{-|\lambda+\mu+\rho|^2+|\rho|^2}{N^2}\right)e^{it(-|\lambda+\mu+\rho|^2+|\rho|^2)} d_{\lambda+\mu} \Phi_{\lambda+\mu}. 
\end{align*}
By the finiteness of $\Lambda^+/\Gamma^+_1$, it suffices to prove 
\eqref{DispersiveNeighborhood(i)} replacing $K_N$ by $K_N^\mu$.  By \eqref{Philambda+mu}, 
\begin{align*}
K_{N}^\mu(t,[iH+iH_0])=e^{i\mu(H_0)}\sum_{\lambda\in\Gamma^+_1}\varphi\left(\frac{-|\lambda+\mu+\rho|^2+|\rho|^2}{N^2}\right)e^{it(-|\lambda+\mu+\rho|^2+|\rho|^2)} d_{\lambda+\mu} \Phi_{\lambda+\mu}([iH]).
\end{align*}
The assumption $d([iH+iH_0], [iH_0])\lesssim N^{-1}$ yields $d([iH],eK)\lesssim N^{-1}$. 
The rest of the proof now follows exactly as that of Proposition \ref{DispersiveNeighborhood}. Write 
$\lambda=n_1Bw_1+\cdots+n_rBw_r$, and 
$\Phi(n_1,\ldots,n_r,uK):=\Phi_{\lambda+\mu}(uK)$. Then the analogue of Lemma \ref{differencePhi} for $\Phi(n_1,\ldots,n_r,uK)$ still holds following the same proof, and we apply Lemma \ref{WeylSumCor} to $f(\lambda)=d_{\lambda+\mu}\Phi_{\lambda+\mu}([iH])$ and finish the proof. 
\end{proof}

We can now finish the 
\begin{proof}[Proof of Proposition \ref{Lppart1}]
Recall that corners $[iH_0]$ are such that $\alpha(H_0)\in \pi \mathbb{Z}$ for all $\alpha\in\Sigma$. Consider $[iH]\in A$. As the Riemannian distance $d(\cdot,\cdot)$ is induced from the Killing form, $d([iH],[iH_0])\leq N^{-1}$ implies that the  distance between $H-H_0$ and the lattice $2\pi  \mathbb{Z}\frac{H_{\alpha_1}}{\langle\alpha_1,\alpha_1\rangle}+\cdots+2\pi  \mathbb{Z}\frac{H_{\alpha_r}}{\langle\alpha_r,\alpha_r\rangle}$ is no larger than $N^{-1}$. In particular, this implies that for all $\alpha\in \Sigma$, $|\sin\langle\alpha,H\rangle |\lesssim N^{-1}$, and so further 
$$D([iH])=\prod_{\alpha\in \Sigma^+}|\sin\langle\alpha,H\rangle |^{m_\alpha}\lesssim N^{-\sum_{\alpha\in\Sigma^+} m_\alpha}=N^{-(d-r)}.$$
By Proposition \ref{DispersiveCorner} and an application of \eqref{integralformula}, then we have 
\begin{align*}
\|K_N(t,\cdot )\|_{L^p\left(U_{[iH_0],N^{-1}}\right)}
&=c\left(\int_{|H-H_0|\leq N^{-1}}|K_N(t,[iH])|^p D([iH]) \ dH\right)^{1/p}\\
&\lesssim_\varepsilon \frac{N^{d-\frac{d-r}{p}+\varepsilon}}{[\sqrt{q}(1+N\|\frac{t}{T}-\frac{a}{q}\|^{1/2})]^r}\cdot\left(\int_{|H-H_0|\leq N^{-1}}\ dH\right)^{1/p}\\
&\lesssim_\varepsilon \frac{N^{d-\frac{d-r}{p}-\frac{r}{p}+\varepsilon}}{[\sqrt{q}(1+N\|\frac{t}{T}-\frac{a}{q}\|^{1/2})]^r}= \frac{N^{d-\frac{d}{p}+\varepsilon}}{[\sqrt{q}(1+N\|\frac{t}{T}-\frac{a}{q}\|^{1/2})]^r}.
\end{align*}
\end{proof}

\section{Dispersive estimates away from the corners}

We have in Proposition \ref{Lppart1} derived favorable estimates for the Schr\"odinger kernel on $N^{-1}$ neighborhoods of the corners for a general compact globally symmetric space. For estimates away from the corners, in \cite{Zha21} we established the desired $L^\infty$ estimates using Clerc's formula for spherical functions on a general compact symmetric space. However, $L^p$ estimates of the Schr\"odinger kernel as in Theorem \ref{keyproposition} are still missing in the literature for general compact symmetric spaces. We now provide these estimates for products of odd-dimensional spheres, and the major tools to use are the explicit formulas of the ultraspherical polynomials which express the spherical functions on odd-dimensional spheres. 

\begin{proof}[Proof of Theorem \ref{keyproposition}] 
We first treat the case when $M=\mathbb{S}^d=\text{SO}(d+1)/\text{SO}(d)$ is a single sphere of dimension $d=2\lambda+1$. As reviewed in the Preliminaries, there are two corners on $M$, namely the two poles $p_1$ and $p_2$ which correspond to $0$ and $\pi$ on any maximal torus $\mathbb{R}/2\pi\mathbb{Z}$. 
Let $U(p_i,N^{-1})$ be the collection of points of $M$ which are within a distance of $N^{-1}$ from $p_i$, $i=1,2$. 
Proposition \ref{Lppart1} already provides the following estimate 
$$\|K_N(t,\cdot)\|_{L^p\left(U(p_i,N^{-1})\right)}\lesssim_\varepsilon \frac{N^{d-\frac{d}{p}+\varepsilon}}{[\sqrt{q}(1+N\|\frac{t}{T}-\frac{a}{q}\|^{1/2})]^r}
$$
for $\frac{t}{T}\in\mathcal{M}_{a,q}$ and $p>0$, $i=1,2$. It suffices to get the desired estimate for 
$\|K_N(t,\cdot)\|_{L^p\left(\mathbb{S}^d\setminus \left[U(p_1,N^{-1})\cup U(p_2,N^{-1})\right]\right)}$. Note that $\mathbb{S}^d\setminus \left[U(p_1,N^{-1})\cup U(p_2,N^{-1})\right]$ corresponds to $(N^{-1},\pi-N^{-1})\cup (\pi+N^{-1},2\pi-N^{-1})$ in any maximal torus $A=\mathbb{R}/2\pi\mathbb{Z}$. 
Using the integral formula \eqref{integralformula} which now reads 
$$\int_{U/K}f\ d(uK) =\int_{0}^{2\pi} f(\theta) |\sin\theta|^{d-1} \ d\theta$$
for any continuous $K$-invariant function $f$ on $U/K$, we have 
\begin{align}\label{integralformulaaway}
\|K_N(t,\cdot)\|_{L^p\left(\mathbb{S}^d\setminus  \left[U(p_1,N^{-1})\cup U(p_2,N^{-1})\right]\right)}
=\left(\left(\int_{N^{-1}}^{\pi-N^{-1}}+\int_{\pi+N^{-1}}^{2\pi-N^{-1}}\right)|K_N(t,\theta)|^p|\sin\theta|^{d-1}\ d\theta \right)^{1/p}.
\end{align}
As an $\text{SO}(d)$-invariant function on $\text{SO}(d+1)/\text{SO}(d)$, 
$K_N(t,\cdot)$ is invariant under the Weyl group action $\theta\mapsto 2\pi-\theta$, thus it suffices to estimate 
the  integral $\int_{N^{-1}}^{\pi-N^{-1}}$ in the above. 
The Schr\"odinger kernel reads 
\begin{align*}
K_N(t,\theta)=\sum_{n\in\mathbb{Z}_{\geq 0}} \varphi\left(\frac{(n+\lambda)^2-\lambda^2}{N^2}\right)e^{-it[(n+\lambda)^2-\lambda^2]}d_n\Phi_n^{(\lambda)}(\theta).  
\end{align*}
By \eqref{sphericalsphere}, we have 
$$K_N(t,\theta)=\sum_{\nu=0}^{\lambda-1}K_{N}^{(\nu)}(t,\theta),$$
where 
\begin{align*}
K_{N}^{(\nu)}(t,\theta)=\frac{2}{(2\sin\theta)^{\nu+\lambda}}\sum_{n\in\mathbb{Z}_{\geq 0}} \varphi\left(\frac{(n+\lambda)^2-\lambda^2}{N^2}\right)e^{-it[(n+\lambda)^2-\lambda^2]}d_nC_{n,\nu}\cos((n-\nu+\lambda)\theta-(\nu+\lambda)\pi/2),
\end{align*}
with 
\begin{align*}
C_{n,\nu}=\binom{n+2\lambda-1}{n}^{-1}\binom{n+\lambda-1}{n}\binom{\nu+\lambda-1}{\nu}\frac{(1-\lambda)\cdots(\nu-\lambda)}{(n+\lambda-1)\cdots(n+\lambda-\nu)}. 
\end{align*}
\begin{lem}
For $\nu=0,\ldots,\lambda-1$, let 
$$\kappa_N^{(\nu)}(t,\theta)=\sum_{n\in\mathbb{Z}_{\geq 0}} \varphi\left(\frac{(n+\lambda)^2-\lambda^2}{N^2}\right)e^{-it[(n+\lambda)^2-\lambda^2]}d_nC_{n,\nu}\cos((n-\nu+\lambda)\theta-(\nu+\lambda)\pi/2).$$
Then 
$$\|\kappa_N^{(\nu)}(t,\cdot)\|_{L^\infty(A)} \lesssim_{\varepsilon} \frac{N^{\lambda-\nu+1+\varepsilon}}{\sqrt{q}(1+N\|\frac{t}{T}-\frac{a}{q}\|^{1/2})}$$
for $\frac{t}{T}\in\mathcal{M}_{a,q}$. 
\end{lem}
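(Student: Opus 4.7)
The plan is to cast $\kappa_N^{(\nu)}$ as a rank-one specialization of the oscillatory sum treated in Lemma \ref{WeylSumCor}. Using Euler's formula I would write
\begin{align*}
\cos((n-\nu+\lambda)\theta-(\nu+\lambda)\pi/2) = \tfrac12 e^{-i(\nu+\lambda)\pi/2}\,e^{i(\lambda-\nu)\theta}\,e^{in\theta} + \text{c.c.},
\end{align*}
pull the $n$-independent phases (including the $e^{it\lambda^2}$ produced by the identity $e^{-it[(n+\lambda)^2-\lambda^2]}=e^{-it(n+\lambda)^2}e^{it\lambda^2}$) out of the sum, and reduce by conjugation symmetry the lemma to the uniform-in-$\theta$ bound on
\begin{align*}
\tilde\kappa_N^{(\nu)}(t,\theta) := \sum_{n\geq 0}\varphi\!\left(\frac{(n+\lambda)^2-\lambda^2}{N^2}\right)\,e^{-it(n+\lambda)^2}\,e^{in\theta}\,f(n),\qquad f(n):=d_n C_{n,\nu}.
\end{align*}

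Next I would verify the discrete-derivative decay hypothesis \eqref{DecCon1} for $f$ on $\mathbb{Z}_{\geq 0}$ with parameter $A=\lambda-\nu$ and rank $r=1$. By Lemma \ref{dimensionformula}, $d_n$ is a polynomial in $n$ of degree $d-r=2\lambda$. Reading off the explicit product in the definition of $C_{n,\nu}$, the factor $\binom{n+2\lambda-1}{n}^{-1}$ has asymptotic size $n^{-(2\lambda-1)}$, the factor $\binom{n+\lambda-1}{n}$ has size $n^{\lambda-1}$, the denominator $(n+\lambda-1)\cdots(n+\lambda-\nu)$ contributes $n^{-\nu}$, and everything else is constant in $n$. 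Hence $C_{n,\nu}$ is a rational function of $n$ of leading order $n^{-\lambda-\nu}$, so that $f(n)=d_nC_{n,\nu}$ is a rational function of size $\lesssim n^{\lambda-\nu}$ on $\mathbb{Z}_{\geq 0}$. The Leibniz rule \eqref{Leibniz} applied to this rational presentation then gives $|D^{\varepsilon_1}f(n)|\lesssim N^{\lambda-\nu-\varepsilon_1}$ for $\varepsilon_1\in\{0,1\}$, uniformly for $0\leq n\lesssim N$, which is precisely \eqref{DecCon1}.

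With the hypothesis in hand I would invoke Lemma \ref{WeylSumCor} in the generalized form of Remark \ref{WeylSumRemark}, taking $L=\mathbb{Z}\alpha$, $\lambda_0=\lambda\alpha$ and $C=-\lambda^2$ so that $|\lambda+\lambda_0|^2+C$ and $|\lambda+\lambda_0|^2$ reproduce $(n+\lambda)^2-\lambda^2$ and $(n+\lambda)^2$ respectively (under the normalization $|\alpha|=1$ implicit in the rational-metric convention). The phase $e^{in\theta}$ plays the role of $e^{i\langle\lambda,H\rangle}$, and the rationality of the metric ensures that the period $T$ of $K_N$ is commensurate with $2\pi D$, so that $t/T\in \mathcal{M}_{a,q}$ transfers to the major-arc hypothesis of the lemma. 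With $A=\lambda-\nu$ and $r=1$ the conclusion reads
\begin{align*}
|\tilde\kappa_N^{(\nu)}(t,\theta)|\lesssim_\varepsilon \frac{N^{\lambda-\nu+1+\varepsilon}}{\sqrt{q}\bigl(1+N\|t/T-a/q\|^{1/2}\bigr)}
\end{align*}
uniformly in $\theta$, which is the claimed $L^\infty$ bound.

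The main obstacle is the elementary but slightly tedious step of certifying \eqref{DecCon1} for $f=d_nC_{n,\nu}$. Concretely, one collapses $\binom{n+2\lambda-1}{n}^{-1}\binom{n+\lambda-1}{n}$ to $(2\lambda-1)!/[(\lambda-1)!\,(n+\lambda)(n+\lambda+1)\cdots(n+2\lambda-1)]$ and observes that the full denominator $(n+\lambda)\cdots(n+2\lambda-1)\,(n+\lambda-1)\cdots(n+\lambda-\nu)$ is bounded below by a positive constant on $\mathbb{Z}_{\geq 0}$, so that $f$ is a smooth rational function of polynomial type. Once this structural picture is made precise, the Leibniz rule controls successive differences and Lemma \ref{WeylSumCor} closes the estimate directly.
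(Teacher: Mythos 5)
Your proposal is correct and follows essentially the same route as the paper: write the cosine as complex exponentials, check that $d_nC_{n,\nu}$ is a rational function of net degree $\lambda-\nu$ (numerator degree $3\lambda-1$ versus denominator degree $2\lambda-1+\nu$) so that \eqref{DecCon1} holds with $A=\lambda-\nu$, $r=1$, and then apply Lemma \ref{WeylSumCor} in the shifted form of Remark \ref{WeylSumRemark}. Your verification of the difference bounds and of the nonvanishing of the denominator is, if anything, slightly more detailed than the paper's.
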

\begin{proof}
Note that $d_n$ is polynomial in $n$ of degree $d-1=2\lambda$, so we can write $d_nC_{n,v}=\frac{f(n)}{g(n)}$ such that $f(n)$ and $g(n)$ are polynomials of degree $3\lambda-1$ and $2\lambda-1+\nu$ respectively. As $\nu\leq \lambda-1$, it holds  $2\lambda-1+\nu\leq 3\lambda-2$. This implies that $d_nC_{n,v}$ satisfies an estimate of the form \eqref{DecCon1} as follows
\begin{align*}
|D^{\mu}(d_nC_{n,\nu})|\lesssim N^{\lambda-\nu-\mu}
\end{align*}
for $\mu=0,1$, 
uniformly for $0\leq n\lesssim N$. Using 
$\cos \phi=\frac{1}{2}(e^{i\phi}+e^{-i\phi})$, then we apply Lemma \ref{WeylSumCor} to finish the proof. 
\end{proof}

Using this lemma, we now estimate 
\begin{align*}
\int_{N^{-1}}^{\pi-N^{-1}}|K_N^{(\nu)}(t,\theta)|^p|\sin\theta|^{d-1}\ d\theta
&\lesssim \|\kappa_N^{(\nu)}(t,\cdot)\|^p_{L^\infty(A)}
\int_{N^{-1}}^{\pi-N^{-1}} |\sin\theta|^{d-1-p(\nu+\lambda)}\ d\theta.
\end{align*}
We require 
$$d-1-p(\nu+\lambda)\leq -1\text{ for any }\nu=0,\ldots,\lambda-1,$$
which is equivalent to 
$$p\geq\frac{d}{\lambda}=\frac{2d}{d-1}.$$
Under this requirement of $p$, we have 
$$\int_{N^{-1}}^{\pi-N^{-1}} |\sin\theta|^{d-1-p(\nu+\lambda)}\ d\theta\lesssim_\varepsilon N^{p(\nu+\lambda)-d+\varepsilon}.$$
Then
\begin{align*}
\int_{N^{-1}}^{\pi-N^{-1}}|K_N(t,\theta)|^p|\sin\theta|^{d-1}\ d\theta 
 \lesssim_{\varepsilon} \left(\frac{N^{\lambda-\nu+1+\varepsilon}}{\sqrt{q}(1+N\|\frac{t}{T}-\frac{a}{q}\|^{1/2})}\right)^p
N^{p(\nu+\lambda)-d+\varepsilon}
\end{align*}
which by \eqref{integralformulaaway} yields the desired estimate 
\begin{align}\label{boundforsinglesphere}
\|K_N(t,\cdot)\|_{L^p\left(\mathbb{S}^d\setminus \left[U(p_1,N^{-1})\cup U(p_2,N^{-1})\right]\right)}
\lesssim_{\varepsilon}\frac{N^{d-\frac{d}{p}+\varepsilon}}{\sqrt{q}(1+N\|\frac{t}{T}-\frac{a}{q}\|^{1/2})}\text{ for all }p\geq \frac{2d}{d-1}.
\end{align}
We have finish the proof for single spheres. 
For the cases when $M=\mathbb{S}^{d_1}\times\cdots\times\mathbb{S}^{d_r}$ is a product of odd-dimensional spheres, 
the mollified Schr\"odinger kernel $K_N(t,\cdot)$ for $M$ is the product of the mollified Schr\"odinger kernels $K_{N}^{(j)}(t,\cdot)$ for each of the $\mathbb{S}^{d_j}$'s ($j=1,\ldots,r$). Then the $L^p(M)$ norm of $K_N(t,\cdot)$ is simply the product of the $L^p(\mathbb{S}^{d_j})$ norm of $K_{N}^{(j)}(t,\cdot)$, each bounded by \eqref{boundforsinglesphere}. Multiplying the bounds together over $j$, we finish the proof of Theorem \ref{keyproposition}. 
\end{proof}

\section*{Acknowledgments} The author thanks the referee for valuable feedback.

\end{document}